\title[On cancellation for 4-manifolds]{Cancellation for 4-manifolds with\\ virtually abelian fundamental group}
\author{Qayum Khan}
\address{Department of Mathematics, Saint Louis University, St Louis MO 63103 U.S.A.}
\email{khanq@slu.edu}
\definecolor{dark-red}{rgb}{0.4,0.15,0.15}
\definecolor{dark-blue}{rgb}{0.15,0.15,0.4}
\definecolor{medium-blue}{rgb}{0,0,0.5}
\newtheorem{thm}{Theorem}[section]
\newtheorem{cor}[thm]{Corollary}
\newtheorem{prop}[thm]{Proposition}
\newtheorem{lem}[thm]{Lemma}
\theoremstyle{definition}
\newtheorem{defn}[thm]{Definition}
\theoremstyle{remark}
\newtheorem{rem}[thm]{Remark}
\newcommand{\nc}{\newcommand}
\newcommand{\qbold}[1]{\boldsymbol{\mathsf{#1}}}
\newcommand{\ncc}[2]{\newcommand{#1}{\mathcal{#2}}}
\ncc{\cS}{S}
\newcommand{\ncf}[2]{\newcommand{#1}{\mathscr{#2}}}
\ncf{\fH}{H}
\newcommand{\nck}[2]{\newcommand{#1}{\mathfrak{#2}}}
\nck{\km}{m}
\nck{\kM}{M}
\nck{\kP}{P}
\newcommand{\ncm}[2]{\newcommand{#1}{\mathrm{#2}}}
\ncm{\Center}{Center}
\ncm{\Closed}{ClosedSets}
\ncm{\closure}{closure}
\ncm{\DIFF}{DIFF}
\ncm{\Ker}{Ker}
\ncm{\Img}{Im}
\ncm{\intr}{int}
\ncm{\maxspec}{maxspec}
\ncm{\nbhd}{nbhd}
\ncm{\Nil}{Nil}
\ncm{\pt}{pt}
\ncm{\rad}{rad}
\ncm{\spec}{spec}
\ncm{\Split}{split}
\ncm{\TOP}{TOP}
\ncm{\UNil}{UNil}
\ncm{\Wh}{Wh}
\newcommand{\ncq}[2]{\newcommand{#1}{\qbold{#2}}}
\ncq{\C}{C}
\ncq{\CP}{CP}
\ncq{\D}{D}
\ncq{\Id}{1}
\nc{\N}{\qbold{Z}_{\geqslant 0}}
\ncq{\R}{R}
\ncq{\RP}{RP}
\ncq{\Z}{Z}
\nc{\gens}[1]{\left\langle #1 \right\rangle}
\nc{\Inn}[2]{\left\langle #1, #2 \right\rangle}
\nc{\ol}[1]{\overline{#1}}
\nc{\prn}[1]{\left( #1 \right)}
\nc{\set}[1]{\left\{\, #1 \,\right\}}
\nc{\wt}[1]{{\widetilde{#1}}}
\nc{\xra}[1]{\xrightarrow{#1}}
\nc{\bdry}{\partial}
\nc{\en}{\enspace}
\nc{\G}{\Gamma}
\nc{\homeo}{\approx}
\nc{\inv}{^{-1}}
\nc{\iso}{\cong}
\nc{\rel}{\;\mathrm{rel}\,}
\nc{\ST}{\;|\;}
\nc{\vphi}{\varphi}
\nc{\x}{\times}
\nc{\xo}{\otimes}
\begin{document}

\begin{abstract}
Suppose $X$ and $Y$ are compact connected topological 4-manifolds with fundamental group $\pi$.
For any $r \geqslant 0$, \textbf{$Y$ is $r$-stably homeomorphic to $X$} if $Y \# r(S^2 \x S^2)$ is homeomorphic to $X \# r(S^2\x S^2)$.
How close is stable homeomorphism to homeomorphism?

When the common fundamental group $\pi$ is virtually abelian, we show that large $r$ can be diminished to $n+2$, where $\pi$ has a finite-index subgroup that is free-abelian of rank $n$.
In particular, if $\pi$ is finite then $n=0$, 
hence $X$ and $Y$ are $2$-stably homeomorphic, which is one $S^2 \times S^2$ summand in excess of the cancellation theorem of Hambleton--Kreck \cite{HK}.

The last section is a case-study investigation of the homeomorphism classification of closed manifolds in the tangential homotopy type of $X = X_- \# X_+$, where $X_\pm$ are closed nonorientable topological 4-manifolds whose fundamental groups have order two \cite{HKT}.
\end{abstract}

\maketitle

\section{Introduction}

Suppose $X$ is a compact connected smooth 4-manifold, with fundamental group $\pi$ and orientation character $\omega : \pi \to \{\pm 1\}$.
Our motivation herein is the Cappell--Shaneson stable surgery sequence \cite[3.1]{CS1}, whose construction involves certain stable diffeomorphisms.
These explicit self-diffeomorphisms lead to a modified version of Wall realization $\rel \bdry X$:
\begin{equation}\label{eqn:CS}
L_5^s(\Z[\pi^\omega]) \x \cS_\DIFF^s(X) \xra{\qquad} \ol{\cS}_\DIFF^s(X),
\end{equation}
where $\cS$ is the simple smooth structure set and $\ol{\cS}$ and is the stable structure set.
Recall that the equivalence relation on these structure sets is smooth $s$-bordism of smooth manifold homotopy structures.
The actual statement of \cite[Theorem 3.1]{CS1} is sharper in that the amount of stabilization, that is, the number of connected summands of $S^2 \x S^2$, depends only on the rank of a representative of a given element of the odd-dimensional $L$-group.

In the case $X$ is \emph{sufficiently large}, in that it contains a two-sided incompressible smooth 3-submanifold $\Sigma$, a periodicity argument using Cappell's decomposition \cite[7]{CappellFree} shows that the restriction of the above action on $\cS_{\DIFF}^s(X)$ to the summand $\UNil_5^s$ of $L_5^s(\Z[\pi^\omega])$ is free.
Therefore for each nonzero element of this exotic $\UNil$-group, there exists a distinct, stable, smooth homotopy structure on $X$, restricting to a diffeomorphism on $\bdry X$, which is not $\Z[\pi_1(\Sigma)]$-homology splittable along $\Sigma$.
If $\Sigma$ is the 3-sphere, the $\TOP$ case is \cite{Khan_connect}.
Furthermore, when $X$ is a connected sum of two copies of $\RP^4$, see \cite{JK} and \cite{BDK}.

For any $r \geqslant 0$, denote the \textbf{$r$-stabilization of $X$} by
\[
X_r := X \# r (S^2 \x S^2).
\]

\subsection*{Acknowledgments}

I would like to thank Jim Davis for having interested me in relating stabilization to non-splittably fake connected sums of 4-manifolds.
Completed under his supervision, this long-delayed paper constitutes a chapter of the author's thesis \cite{Khan_Dissertation};
note Proposition~\ref{PropB} was recently extended from virtually cyclic to virtually abelian groups.

\section{On the topological classification of 4-manifolds}\label{Sec_Classification}

The main result (\ref{Cor_main}) of this section is an upper bound on the number of $S^2\x S^2$ connected summands sufficient for a \emph{stable homeomorphism,} where the fundamental group of $X$ lies in a certain class of good groups.
By using Freedman--Quinn surgery \cite[\S 11]{FQ}, if $X$ is also sufficiently large (\ref{Cor_PropB} for example), each nonzero element $\vartheta$ of the $\UNil$-group and simple $\DIFF$ homotopy structure $(Y,h: Y \to X)$ pair to form a distinct $\TOP$ homotopy structure $(Y_\vartheta,h_\vartheta)$ that represents the $\DIFF$ homotopy structure $\vartheta \cdot (Y,h)$ obtained from \eqref{eqn:CS}.

\subsection{Statement of results}

For finite groups $\pi$, the theorem's conclusion and the proof's topology are similar to Hambleton--Kreck \cite{HK}.
However, the algebra is quite different.

\begin{thm}\label{ThmB}
Suppose $\pi$ is a good group (in the sense of \cite{FQ}) with orientation character $\omega: \pi \to \{\pm 1\}$.
Consider $A := \Z[\pi^\omega]$, a group ring with involution: $\ol{g} = \omega(g) g^{-1}$.
Select an involution-invariant subring $R$ of the commutative $\Center(A)$.
Its norm subring is
\[
R_0 ~:=~ \set{\sum_i x_i \ol{x}_i \;\bigg|\; x_i \in R}.
\]
Suppose $A$ is a finitely generated $R_0$-module, $R_0$ is noetherian, and the dimension $d$ is finite:
\[
d := \dim(\maxspec\, R_0) < \infty.
\]

Now suppose that $X$ is a compact connected $\TOP$ 4-manifold with
\[
(\pi_1(X), w_1) = (\pi,\omega)
\]
and that it has the form
\[
(X,\bdry X) = (X_{-1}, \bdry X) \# (S^2 \x S^2).
\]
If $X_r$ is homeomorphic to $Y_r$ for some $r \geqslant 0$, then $X_d$ is homeomorphic to $Y_d$.
\end{thm}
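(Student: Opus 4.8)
The plan is to reduce the homeomorphism problem to a cancellation problem for hyperbolic forms over $A$, and to solve the latter by a Bass-type stable-range argument. For the reduction: $\pi$ being good makes topological surgery and the $s$-cobordism theorem available in dimension $4$ (Freedman--Quinn \cite{FQ}), while the hypothesis $X = X_{-1} \# (S^2 \x S^2)$ places every $X_k$ and $Y_k$ (for $k \geqslant 1$) in the range where the stable surgery classification applies: for such $4$-manifolds with a fixed normal $1$-type, the homeomorphism type rel~$\bdry$ is detected by the Kirby--Siebenmann invariant together with the equivariant intersection form $\lambda$ on $\pi_2$ --- a nonsingular quadratic form over $A = \Z[\pi^\omega]$ on a finitely generated projective module --- by Hambleton--Kreck \cite{HK}. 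The normal $1$-type, the Kirby--Siebenmann invariant, and $(\pi_1, w_1)$ are all unchanged by connected sum with $S^2 \x S^2$ and agree for $X_r$ and $Y_r$ by hypothesis, hence for every pair $X_k, Y_k$; so ``$X_k \homeo Y_k$'' is equivalent to ``$\lambda_{X_k} \iso \lambda_{Y_k}$.'' Writing $\mathbb{H}$ for the hyperbolic plane $\mathbb{H}(A)$ and $\lambda_0 := \lambda_{X_{-1}}$, one has $\lambda_{X_k} \iso \lambda_0 \perp (k{+}1)\mathbb{H}$, $\lambda_{Y_k} \iso \lambda_Y \perp k\mathbb{H}$, and the hypothesis becomes $\lambda_0 \perp (r{+}1)\mathbb{H} \iso \lambda_Y \perp r\mathbb{H}$.

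The engine is then a cancellation theorem over $A$: because $A$ is module-finite over the commutative noetherian ring $R_0$ with $d = \dim(\maxspec R_0)$, any nonsingular form over $A$ that contains at least $d+1$ orthogonal hyperbolic planes is cancellative --- $q \perp \mathbb{H} \iso q' \perp \mathbb{H}$ forces $q \iso q'$. Granting this, I destabilize by downward induction: for $d+1 \leqslant k \leqslant r$, rewrite $\lambda_0 \perp (k{+}1)\mathbb{H} \iso \lambda_Y \perp k\mathbb{H}$ as $(\lambda_0 \perp k\mathbb{H}) \perp \mathbb{H} \iso (\lambda_Y \perp (k{-}1)\mathbb{H}) \perp \mathbb{H}$; since $\lambda_0 \perp k\mathbb{H}$ contains $k \geqslant d+1$ hyperbolic planes, cancellation yields $\lambda_0 \perp k\mathbb{H} \iso \lambda_Y \perp (k{-}1)\mathbb{H}$, i.e.\ $\lambda_{X_{k-1}} \iso \lambda_{Y_{k-1}}$, hence $X_{k-1} \homeo Y_{k-1}$. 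Running $k$ downward from $r$ to $d+1$ (and simply stabilizing up when $r < d$) gives $X_d \homeo Y_d$. It is precisely the distinguished summand of $X = X_{-1} \# (S^2 \x S^2)$ that furnishes one hyperbolic plane beyond the $r$ stabilizing ones, letting the induction stop at $d$ rather than at $d+1$; applying the theorem to $X \# (S^2 \x S^2)$ in place of $X$ therefore yields $(d{+}1)$-stable homeomorphism for a general $X$ --- equal to $2$ when $\pi$ is finite ($d = 1$, e.g.\ $R_0 = \Z$) and to $n+2$ when $\pi$ is virtually abelian with a free-abelian finite-index subgroup of rank $n$ ($d = n+1$, taking $R_0$ module-finite over $\Z[\Z^n]$).

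The step I expect to be hardest is the cancellation theorem with the sharp bound $d+1$. Its backbone is Bass's stable range estimate: since $A$ is a module-finite algebra over the noetherian ring $R_0$, one has $\mathrm{sr}(A) \leqslant \dim(\maxspec R_0) + 1 = d+1$. The delicate part is passing from the general-linear stable range to the hermitian one --- cancellation of a hyperbolic plane once at least $\mathrm{sr}(A)$ of them are present --- while keeping the bound pinned to $\dim(\maxspec R_0)$ and not to some coarser invariant of the (generally noncommutative) ring $A$. This forces one to exploit the involution $\ol g = \omega(g)g^{-1}$, the centrality of $R_0$ (so that $A$ is module-finite over $R_0$ as a ring with involution, and patching or transfer over $\maxspec R_0$ is legitimate), and the stability theory of unitary groups, in the tradition of Bass, Vaserstein, and Bak on hermitian $K$-theory; as the text indicates, the topology here parallels \cite{HK}, but this algebra does not. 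What remains is routine: the two computations of $d$, and the verification that the \cite{HK}-style classification is available rel~$\bdry$ for the possibly nonorientable, possibly bounded manifolds in play.
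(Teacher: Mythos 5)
Your algebraic engine is essentially the same as the paper's: the noetherian commutative $R_0$ with $d=\dim(\maxspec R_0)$ gives a stable-range bound on $A$, and Bass's unitary stability theory then lets one ``move'' one extra hyperbolic plane so long as $d+1$ are present.  The paper makes this precise via Lemma~\ref{LemB2} (quasi-finiteness of $(A,\Lambda)$ as a unitary $(R,+1)$-algebra) and then invokes \cite[Theorem IV:3.4, Corollary IV:3.5]{Bass2}, which is transitivity of the group $G_1$ on hyperbolic pairs in $V\perp\fH(P)$ --- the unitary-group incarnation of the cancellation you describe.  So far, so good.

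The genuine gap is in your topological reduction.  You assert that, once one $S^2\times S^2$ summand is present, ``the homeomorphism type rel~$\bdry$ is detected by the Kirby--Siebenmann invariant together with the equivariant intersection form $\lambda$ on $\pi_2$, by Hambleton--Kreck.''  No such classification theorem is available for an arbitrary good (let alone arbitrary virtually abelian) fundamental group, and it is not what \cite{HK} proves: Hambleton--Kreck prove a \emph{cancellation} theorem for finite fundamental groups, by constructing an $s$-cobordism, not a classification of unstabilized 4-manifolds by $(\mathrm{KS},\lambda)$.  What modified/stable surgery actually gives is that agreement of the normal $1$-type, KS, and $\lambda$ implies \emph{stable} homeomorphism --- after some unspecified number of $S^2\times S^2$'s.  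Upgrading ``same intersection form'' to ``homeomorphic'' for a fixed $k$ is exactly the destabilization problem Theorem~\ref{ThmB} is trying to solve, so your inductive step ``$\lambda_{X_{k-1}}\iso\lambda_{Y_{k-1}}\Rightarrow X_{k-1}\homeo Y_{k-1}$'' is circular.

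The paper instead keeps the argument geometric.  Given the homeomorphism $f:X_r\to Y_r$, it uses the transitivity above to find $\vphi\in G_1$ carrying the standard hyperbolic pair in the distinguished $S^2\times S^2$ summand of $\ol X\#(S^2\times S^2)$ to $f_*^{-1}$ of the corresponding pair on the $Y$ side.  Lemmas~\ref{LemB1}, \ref{LemB1a}, \ref{LemB1b}, and \ref{LemB3} show that (a three-fold stabilization of) $\vphi$ is induced by a self-homeomorphism $g$ rel $\bdry X$ --- this is where the real topology lives: decomposing into transvections $\sigma_{p_i,0,v_j}$, topologically re-splitting the connected sum so that $S^2\times\pt$ represents $p_i$ (using Freedman's disc embedding theorem), and invoking the Cappell--Shaneson realization theorem.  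The composite $h=(f\#\Id)\circ g$ then carries standard hyperbolic pairs to standard ones, which is exactly the condition (Lemma~\ref{LemB4}) under which clutching two handlebodies $\ol X\times[0,1]\natural\,4(S^2\times D^3)$ and $\ol Y\times[0,1]\natural\,4(S^2\times D^3)$ along $h$ produces a compact $\TOP$ $s$-cobordism rel $\bdry X$; the $\TOP$ $s$-cobordism theorem for good $\pi$ finishes the inductive step.  None of the realization and re-splitting work appears in your proposal, yet it is the bulk of the proof; the algebra, which you did identify correctly, is the comparatively routine part.
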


Here are the class of examples of good fundamental groups promised in the paper's title.

\begin{prop}\label{PropB}
Suppose $\pi$ is a finitely generated, virtually abelian group, with any homomorphism $\omega: \pi \to \{\pm 1\}$.
For some $R$, the pair $(\pi,\omega)$ satisfies the above hypotheses:
$\pi$ is good, $A$ is a finitely generated $R_0$-module, $R_0$ is noetherian, and $d$ is finite.
Furthermore, $d=n+1$, where $\pi$ contains a finite-index subgroup that is free-abelian of finite rank $n \geqslant 0$.
\end{prop}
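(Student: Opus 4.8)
The plan is to exhibit a concrete involution-invariant subring $R \subseteq \Center(A)$ and check the four hypotheses of Theorem~\ref{ThmB} one at a time. First I would reduce to a normal free-abelian subgroup: since $\pi$ is finitely generated and virtually abelian, a standard argument produces a finite-index subgroup $\Z^n \leqslant \pi$ that is normal (intersect the conjugates of a finite-index free-abelian subgroup, then pass to a finite-index free-abelian subgroup of that intersection), so $\pi$ sits in an extension $1 \to \Z^n \to \pi \to G \to 1$ with $G$ finite. Goodness of $\pi$ is immediate: virtually abelian groups are virtually polycyclic, hence elementary amenable, and all such groups are good in the sense of \cite{FQ}.

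The heart of the matter is choosing $R$. The natural candidate is the integral group ring of the center of $\Z^n$ suitably averaged to be $\pi$-invariant: concretely, let $C$ be a finite-index characteristic free-abelian subgroup of $\pi$ that is central in the chosen $\Z^n$ (one can take $C$ to be a finite-index subgroup of $\Z^n$ on which the finite group $G$ acts, then replace $\Z[C]$ by the invariants or by the subring generated by $G$-orbit sums), and set $R$ to be the image in $\Center(A)$ of the $\omega$-twisted, $G$-invariant elements of $\Z[C]$. The point is that $\Z[C] \iso \Z[t_1^{\pm1},\dots,t_n^{\pm1}]$ is a Laurent polynomial ring over $\Z$, so it is noetherian of Krull dimension $n+1$; its ring of invariants under the finite group $G$ is again noetherian (Hilbert's basis theorem / Noether's theorem on finiteness of invariants, since $\Z[C]$ is finitely generated as a $\Z$-algebra and a finite module over its invariant subring) and has the same dimension $n+1$ because $\Z[C]$ is integral over it. I then need to identify $R_0$: the norm subring $R_0 = \{\sum x_i\overline{x}_i\}$ sits between $R^{\mathrm{something}}$ and $R$, and I would argue that $R$ is a finite $R_0$-module (each generator satisfies a monic polynomial over $R_0$, e.g. $t + \overline t$ and $t\overline t = \pm 1$ exhibit $t$ as integral over $R_0$), so $R_0$ is noetherian by Eakin--Nagata and $\dim \maxspec R_0 = \dim\maxspec R = n+1$ by integrality. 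Finally $A = \Z[\pi^\omega]$ is a finite module over $\Z[C]$ (cosets of $C$ in $\pi$ give a finite generating set), hence a finite module over $R$, hence a finite $R_0$-module by transitivity.

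The step I expect to be the main obstacle is keeping $R$ simultaneously \emph{involution-invariant}, \emph{central}, and large enough that $A$ is module-finite over it, while controlling $R_0$ precisely enough to compute $\dim\maxspec R_0 = n+1$ rather than merely bounding it. The twisting by $\omega$ is the subtle point: $\overline{g} = \omega(g)g^{-1}$ mixes a sign with inversion, so the naive invariant subring of $\Z[C]$ need not be involution-stable unless $C$ is chosen inside $\Ker(\omega)$ or unless one enlarges by the $\pm1$ scalars; I would handle this by first replacing $C$ by $C \cap \Ker(\omega)$ (still finite index, still free-abelian of rank $n$) so that the involution on $\Z[C]$ is just $g \mapsto g^{-1}$, making $\Z[C]^G$ manifestly involution-invariant. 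The dimension count $d = n+1$ then follows because $\maxspec$ of a finite-type $\Z$-algebra of Krull dimension $n+1$ (Laurent polynomials in $n$ variables over $\Z$, or its finite integral extensions/invariants) has dimension $n+1$, the extra $+1$ coming from the arithmetic direction $\spec \Z$; I would cite the standard dimension formula for finitely generated algebras over $\Z$ and invoke that $\maxspec$ and $\spec$ have the same dimension for Jacobson rings.
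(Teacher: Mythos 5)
Your proposal follows essentially the same route as the paper: choose a normal finite-index free-abelian subgroup $\G \cong \Z^n$, take $R = \Z[\G^\omega]^G$, deduce module-finiteness from invariant theory together with Bass's integrality of $R$ over its norm subring $R_0$, get noetherianness of $R_0$ via Eakin's theorem, and compute $d = n+1$ by climbing the integral extensions $R_0 \subseteq R \subseteq \Z[\G^\omega]$ (Krull dimension $n+1$) and invoking the Jacobson property of finite-type $\Z$-algebras to equate $\dim\maxspec$ with $\dim\spec$. One small correction: the concern about involution-invariance of $R$ is unfounded, since the twisted involution $\gamma \mapsto \omega(\gamma)\gamma^{-1}$ commutes with the $G$-conjugation action (the signs $\omega(g)$ and $\omega(g^{-1})$ cancel in $\overline{g\gamma g^{-1}} = \omega(\gamma)\,g\gamma^{-1}g^{-1}$), so $\Z[\G^\omega]^G$ is automatically involution-stable and the reduction to $\G \cap \Ker(\omega)$ is unnecessary.
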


The author's original motivations are infinite virtually cyclic groups of the second kind.

\begin{cor}\label{Cor_PropB}
Let $X$ be a compact connected $\TOP$ 4-manifold whose fundamental group is an amalgamated product $G_- *_F G_+$ with $F$ a finite common subgroup of $G_\pm$ of index two.
If $Y$ is stably homeomorphic to $X$, then $Y \# 3(S^2\x S^2)$ is homeomorphic to $X \# 3(S^2\x S^2)$.
\end{cor}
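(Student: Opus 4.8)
The plan is to derive the corollary from Theorem~\ref{ThmB} and Proposition~\ref{PropB}; the only real work is to recognize the fundamental group and to track the stabilization count across one preliminary connected sum.

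First I would identify $\pi := G_- *_F G_+$ as a finitely generated virtually cyclic group of the second kind. Because $F$ is finite of index two in each of $G_\pm$, both factors are finite and $\pi$ is finitely generated; and $\pi$ acts simplicially and cocompactly, with finite stabilizers, on its Bass--Serre tree, which is a line since each vertex has valence $[G_-:F] = [G_+:F] = 2$. As the amalgam is proper, $\pi$ is infinite; the induced homomorphism from $\pi$ to the automorphism group of that line --- which is infinite dihedral --- then has finite kernel and infinite image, so $\pi$ is virtually infinite cyclic and contains a finite-index subgroup that is free-abelian of rank $n = 1$. In particular $\pi$ is finitely generated and virtually abelian, so Proposition~\ref{PropB} provides an involution-invariant subring $R \subseteq \Center(\Z[\pi^\omega])$ for which $(\pi,\omega)$ satisfies every hypothesis of Theorem~\ref{ThmB}, with
\[
d = \dim(\maxspec\, R_0) = n + 1 = 2.
\]

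Next I would install the $S^2 \x S^2$ summand that Theorem~\ref{ThmB} requires of its input, which $X$ itself need not possess. Put $X' := X \# (S^2 \x S^2)$ and $Y' := Y \# (S^2 \x S^2)$. Connected sum with the simply-connected, orientable $S^2 \x S^2$ changes neither $(\pi_1, w_1)$ nor the boundary, so $X'$ is a compact connected $\TOP$ 4-manifold with $(\pi_1(X'), w_1) = (\pi,\omega)$ and $(X', \bdry X') = (X, \bdry X) \# (S^2 \x S^2)$, exactly the shape Theorem~\ref{ThmB} demands. Since $Y$ is stably homeomorphic to $X$, there is some $r_0 \geqslant 0$ with $X_{r_0} \homeo Y_{r_0}$; connect-summing on further copies then gives $X_m \homeo Y_m$ for every $m \geqslant r_0$, so with $m := \max(r_0, 1)$, written $m = s + 1$ with $s \geqslant 0$, we obtain $X'_s = X_m \homeo Y_m = Y'_s$.

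Finally I would feed $X'$, $Y'$, the ring $R$, and $d = 2$ into Theorem~\ref{ThmB}: from $X'_s \homeo Y'_s$ for some $s \geqslant 0$ it returns $X'_2 \homeo Y'_2$, that is,
\[
X \# 3(S^2 \x S^2) = X_3 \homeo Y_3 = Y \# 3(S^2 \x S^2),
\]
which is the assertion. I expect the only obstacle to be these first two bookkeeping steps: correctly placing $G_- *_F G_+$ within the scope of Proposition~\ref{PropB} with $n = 1$, and observing that the single preliminary stabilization plus the $d = 2$ stabilizations yielded by Theorem~\ref{ThmB} total precisely $3$. The remainder is formal manipulation of the stable-homeomorphism relation.
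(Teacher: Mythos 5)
Your proof is correct and follows essentially the same route as the paper: establish that $\pi = G_- *_F G_+$ is virtually infinite cyclic (so $n=1$, $d=2$ in Proposition~\ref{PropB}), then apply Theorem~\ref{ThmB} to $X \# (S^2\x S^2)$ and tally the stabilizations. The only cosmetic difference is that the paper derives virtual cyclicity by quotienting directly by the normal subgroup $F$ to obtain $\C_2 * \C_2 \iso \C_\infty \rtimes_{-1}\C_2$, whereas you invoke the action on the Bass--Serre line; both amount to the same observation.
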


\begin{proof}
Division of $G_\pm$ by the normal subgroup $F$ yields a short exact sequence of groups:
\[\begin{CD}
1 @>>> F @>>> G_- *_F G_+ @>>> \C_2 * \C_2 \iso \C_\infty \rtimes_{-1} \C_2 @>>> 1.
\end{CD}\]
So $\pi_1(X)$ contains an infinite cyclic group of finite index (namely, twice the order of $F$).
Now apply Proposition~\ref{PropB} with $n=1$ ($d=2$).
Then apply Theorem~\ref{ThmB} to $X \# (S^2 \times S^2)$.
\end{proof}

Given the full strength of the proposition, we generalize the above specialized corollary.

\begin{cor}\label{Cor_main}
Let $X$ be a compact connected $\TOP$ 4-manifold whose fundamental group is virtually abelian: say $\pi_1(X)$ contains a finite-index subgroup that is free-abelian of rank $n < \infty$.
If $Y$ is stably homeomorphic to $X$, then $Y$ is $(n+2)$-stably homeomorphic to $X$.
\qed
\end{cor}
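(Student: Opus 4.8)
The plan is to deduce this directly from Proposition~\ref{PropB} and Theorem~\ref{ThmB}, exactly mimicking the proof of Corollary~\ref{Cor_PropB} but with the general rank $n$ in place of $n=1$. First I would invoke Proposition~\ref{PropB}: since $\pi := \pi_1(X)$ is virtually abelian with a finite-index free-abelian subgroup of rank $n$, and since $\pi$ is finitely generated (being the fundamental group of a compact manifold), the proposition supplies an involution-invariant subring $R$ of $\Center(\Z[\pi^\omega])$ for which all the hypotheses of Theorem~\ref{ThmB} hold, and moreover the dimension is $d = n+1$. Here $\omega = w_1(X)$ is the orientation character, which is the homomorphism $\omega: \pi \to \{\pm 1\}$ we feed into the proposition.

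Next I would arrange the connected-sum hypothesis of Theorem~\ref{ThmB}. That theorem requires the manifold to split off an $S^2 \x S^2$ summand, so I replace $X$ by $X' := X \# (S^2 \x S^2) = X_1$, which trivially has the required form $(X',\bdry X') = (X,\bdry X) \# (S^2 \x S^2)$, and likewise replace $Y$ by $Y' := Y \# (S^2 \x S^2) = Y_1$. The fundamental group and orientation character are unchanged by this stabilization, so $(\pi_1(X'),w_1(X')) = (\pi,\omega)$ still, and the ring-theoretic hypotheses secured above still apply verbatim. The hypothesis that $Y$ is stably homeomorphic to $X$ means $X_r \homeo Y_r$ for some $r \geqslant 0$; since $X'_r = X_{r+1}$ and $Y'_r = Y_{r+1}$, this says $X'_{r'} \homeo Y'_{r'}$ for $r' = r+1 \geqslant 0$, so $X'$ and $Y'$ are themselves stably homeomorphic.

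Now I apply Theorem~\ref{ThmB} to $X'$ and $Y'$ with this $d = n+1$: from $X'_{r'} \homeo Y'_{r'}$ we conclude $X'_d \homeo Y'_d$, that is, $X'_{n+1} \homeo Y'_{n+1}$. Unwinding, $X'_{n+1} = X \# (n+2)(S^2 \x S^2) = X_{n+2}$ and similarly $Y'_{n+1} = Y_{n+2}$, so $X_{n+2} \homeo Y_{n+2}$, which is precisely the assertion that $Y$ is $(n+2)$-stably homeomorphic to $X$. There is no real obstacle here — the corollary is a bookkeeping consequence of the two preceding results — but the one point requiring care is the index shift introduced by the auxiliary $S^2 \x S^2$ summand: one must check that passing from $X$ to $X_1$ to satisfy the structural hypothesis of Theorem~\ref{ThmB} costs exactly one extra summand, turning the theorem's output $X_d = X_{n+1}$ (for the stabilized manifold) into $X_{n+2}$ (for the original), which is why the bound is $n+2$ rather than $n+1$.
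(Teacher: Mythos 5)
Your proof is correct and takes exactly the route the paper intends: the corollary is left with a bare \qed precisely because it is the straightforward analogue of the proof of Corollary~\ref{Cor_PropB}, namely invoke Proposition~\ref{PropB} (noting $\pi$ is finitely generated since $X$ is compact) to get $d=n+1$, then apply Theorem~\ref{ThmB} to $X\#(S^2\times S^2)$. Your explicit accounting of the index shift (one extra summand to satisfy the structural hypothesis of Theorem~\ref{ThmB}, turning $d=n+1$ into the final bound $n+2$) is the one genuinely careful point, and you handled it correctly.
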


More generally, can we reach the same conclusion if $\pi$ has a finite-index subgroup $\G$ that is \emph{polycyclic} of Hirsch length $n$?
The example $\pi = \Z^2 \rtimes_{\left(\begin{smallmatrix}2 & 1\\ 1 & 1\end{smallmatrix}\right)} \Z$ is not virtually abelian.


\subsection{Definitions and lemmas}

An exposition of the following concepts with applications is available in Bak's book \cite{BakBook}.
We assume the reader knows the more standard notions.

\begin{defn}[{\cite[I:4.1]{Bass2}}]
A \textbf{unitary ring} $(A, \lambda, \Lambda)$ consists of a ring with involution $A$, an element
\[
\lambda \in \Center(A) \quad\text{satisfying}\quad \lambda
\ol{\lambda} = 1,
\]
and a \textbf{form parameter} $\Lambda$.
This is an abelian subgroup of $A$ satisfying
\[
\set{a + \lambda \ol{a} \ST a \in A} \subseteq \Lambda \subseteq
\set{a \in A \ST a - \lambda \ol{a} = 0}
\]
and
\[
r a \ol{r} \in \Lambda \quad\text{for all}\quad r \in A \text{ and }
a \in \Lambda.
\]
\end{defn}

Here is a left-handed classical definition discussed in the equivalence after its reference.

\begin{defn}[{\cite[I:4.4]{Bass2}}]
We regard a \textbf{quadratic module} over a unitary ring $(A,\lambda,\Lambda)$ as a triple $(M,\Inn{\cdot}{\cdot}, \mu)$ consisting of a left $A$-module $M$, a bi-additive function
\[
\Inn{\cdot}{\cdot}: M \times M \xra{\quad} A \quad\text{such that}\quad \Inn{ax}{by} = a \, \Inn{x}{y} \, \ol{b} \quad\text{and}\quad \Inn{y}{x} = \lambda \, \ol{\Inn{x}{y}}
\]
(called a \textbf{$\lambda$-hermitian form}), and a function (called a \textbf{$\Lambda$-quadratic refinement})
\[
\mu: M \xra{\quad} A/\Lambda \quad\text{such that}\quad \mu(ax) = a \, \mu(x) \, \ol{a} \quad\text{and}\quad [\Inn{x}{y}] = \mu(x+y)-\mu(x)-\mu(y).
\]
\end{defn}

The following unitary automorphisms can be realized by diffeomorphisms \cite[1.5]{CS1}.

\begin{defn}[{\cite[I:5.1]{Bass2}}]\label{Def_transvection}
Let $(M,\Inn{\cdot}{\cdot}, \mu)$ be a quadratic module over a unitary ring $(A,\lambda,\Lambda)$. A \textbf{transvection} $\sigma_{u,a,v}$ is an isometry of this structure defined by the formula
\[
\sigma_{u,a,v}: M \xra{\quad} M; \qquad x \longmapsto x + \Inn{v}{x} u - \ol{\lambda} \Inn{u}{x} v - \ol{\lambda} \Inn{u}{x} a u
\]
where $u,v \in M$ and $a \in A$ are elements satisfying
\[
\Inn{u}{v} = 0 \in A \quad\text{and}\quad
\mu(u) = 0 \in A/\Lambda \quad\text{and}\quad
\mu(v) = [a] \in A/\Lambda.
\]
\end{defn}

The following lemmas involve, for any finitely generated projective $A$-module $P = P^{**}$, a nonsingular $(+1)$-quadratic form over $A$ called the \textbf{hyperbolic construction}
\[
\fH(P) := ( P \oplus P^*, \Inn{\cdot}{\cdot}, \mu ) \en\text{where}\en
\Inn{x + f}{y + g} := f(y) + \ol{g(x)} \en\text{and}\en
\mu(x + f) := [f(x)].
\]
Topologically, $\fH(A)$ is the equivariant intersection form of $S^2 \x S^2$ with coefficients in $A$.

\begin{lem}\label{LemB1}
Consider a compact connected $\TOP$ 4-manifold $X$ with good fundamental group $\pi$ and orientation character $\omega: \pi \to
\{\pm 1\}$.
Define a ring with involution $A := \Z[\pi^\omega]$.
Suppose that there is an orthogonal decomposition
\[
K := \Ker\,w_2(X) = V_0 \perp V_1
\]
as quadratic submodules of the intersection form of $X$ over $A$, with a nonsingular restriction to $V_0$.
Define a homology class and a free $A$-module
\begin{eqnarray*}
p_+ &:=& [S^2_+ \x \pt] \\
P_+ &:=& A p_+.
\end{eqnarray*}
Consider the summand
\[
\fH(P_+) = H_2(S^2_+ \x S^2; A)
\]
of
\[
H_2(X \# (S_+^2 \x S^2) \# (S_-^2 \x S^2);A).
\]
Then for any transvection $\sigma_{p,a,v}$ on the quadratic module $K \perp \fH(P_+)$ with $p \in V_0 \oplus P_+$ and $v \in K$, the
stabilized isometry $\sigma_{p,a,v} \oplus \Id_{H_2(2(S^2\x S^2);A)}$ can be realized by a self-homeomorphism of $X \# 3(S^2 \x S^2)$ which restricts to the identity on $\bdry X$.
\end{lem}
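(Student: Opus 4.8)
The plan is to realize the given transvection as a composition of geometrically meaningful automorphisms, each individually realized by a self-homeomorphism of a sufficiently stabilized manifold, using the Cappell--Shaneson technology of realizing unitary transvections by diffeomorphisms (cited as \cite[1.5]{CS1}) together with Freedman--Quinn topological surgery on the good group $\pi$. First I would recall the structure of the transvection $\sigma_{p,a,v}$: since $p \in V_0 \oplus P_+$ and $v \in K = V_0 \perp V_1$, I would decompose $v = v_0 + v_1$ accordingly and analyze how $\sigma_{p,a,v}$ acts on each summand. The key observation is that the defining conditions $\Inn{p}{v} = 0$, $\mu(p)=0$, $\mu(v)=[a]$ are preserved under the natural inclusion of quadratic modules, so the ``same'' transvection makes sense on $K \perp \fH(P_+) \perp \fH(A)^{\oplus 2}$, and I want to realize that stabilized isometry.

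The core mechanism is standard in this circle of ideas: an elementary unitary transvection $\sigma_{u,a,v}$ in which $u$ spans a hyperbolic summand (i.e., $u$ is one of the standard basis vectors of an $\fH(A)$ factor and is hence part of a hyperbolic pair) can be realized by a self-homeomorphism that is supported in a neighborhood of an embedded $S^2 \x S^2$ summand connect-summed with the relevant part of $X$; this is exactly the kind of move that appears in Wall's proof that automorphisms of the intersection form realized by such transvections come from self-homeomorphisms after stabilization, and topologically it follows from Freedman--Quinn surgery since $\pi$ is good. The strategy is therefore: (1) use the hyperbolic summand $\fH(P_+)$ — with its distinguished Lagrangian pair coming from $p_+ = [S^2_+ \x \pt]$ and its dual — to ``absorb'' the vector $p$; more precisely, write $\sigma_{p,a,v}$ as a product of transvections each of whose first argument $u$ lies in a free hyperbolic $\fH(A)$ (possibly after passing into one of the two extra $S^2 \x S^2$ stabilizing summands), invoking the elementary factorization identities for transvections; (2) realize each such elementary transvection by a self-homeomorphism of $X \# 3(S^2 \x S^2)$ restricting to the identity on $\bdry X$, using \cite[1.5]{CS1} in the smooth category combined with the fact that the $S^2 \x S^2$-summands are smooth and that topological surgery/handle-trading over the good group $\pi$ lets us upgrade the relevant moves to the $\TOP$ category; (3) compose and check that the product self-homeomorphism induces $\sigma_{p,a,v} \oplus \Id$ on $H_2$, with the $\Id$ on the leftover $H_2(2(S^2\x S^2);A)$ coming from the summands used only as ``scratch space'' and then not touched in the final normal form.

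I expect the main obstacle to be step (2) in the $\TOP$ setting: the Cappell--Shaneson realization \cite[1.5]{CS1} is stated for smooth manifolds, and while $X \# 3(S^2\x S^2)$ has a perfectly good smooth structure on the connect-summed region, $X$ itself is only topological, so one must argue carefully that the self-homeomorphism can be taken to be the identity outside a collar-neighborhood of the smooth summands — i.e., that the transvection is realized by a move localized in the smoothable part — and then extended by the identity across the (merely topological) remainder of $X$ and along $\bdry X$. Concretely, the subtlety is that $p$ may have nonzero $V_0$-component, so the geometric support of the realizing homeomorphism seems to involve the part of $X$ carrying $K = \Ker\, w_2$, which is not a priori smooth; the resolution is that after the elementary factorization each transvection's ``active'' vector $u$ can be pushed into a free hyperbolic summand sitting inside the smooth $3(S^2\x S^2)$ region (this is precisely why three, and not fewer, stabilizations are needed — two for scratch space plus the given one), so that the actual homeomorphism is supported in the smooth part and the nonsmooth bulk of $X$ only contributes passively via the module structure. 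Verifying this localization carefully, and checking that the bookkeeping of which $\fH(A)$-factors are used leaves exactly $\Id_{H_2(2(S^2\x S^2);A)}$ at the end, is the technical heart of the argument; the rest is the routine algebra of transvection identities and the by-now-standard invocation of \cite[\S 11]{FQ} for good $\pi$.
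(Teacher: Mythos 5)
Your high-level plan — factor $\sigma_{p,a,v}$ into elementary pieces, realize each by Cappell--Shaneson, and compose — matches the paper's, and you have correctly spotted the crucial obstacle: $p$ has a possibly nonzero $V_0$-component, so the support of any realizing map appears to involve the merely topological bulk of $X$. But your proposed resolution, namely that ``each transvection's active vector $u$ can be pushed into a free hyperbolic summand sitting inside the smooth $3(S^2\x S^2)$ region,'' is not correct, and this is a genuine gap. The factorization (the paper's Lemma~\ref{LemB1b}) produces transvections $\sigma_{p_i,0,v_j}$ in which $p_0 := p' \oplus p_+$ still carries the $V_0$-component $p'$; that component lies in $H_2(X;A)$ and cannot be moved algebraically into the stabilizing $S^2\times S^2$ factors. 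No amount of scratch space makes a unimodular vector with a nonzero $V_0$-part land inside $\fH(A)^{\oplus 3}$.

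The missing idea is topological, not algebraic: one needs the re-splitting result (the paper's Lemma~\ref{LemB1a}). Since $V_0 \perp \fH(P_+)$ is nonsingular and $p_i$ is unimodular and isotropic, one picks a hyperbolic partner $q_i$, represents $p_i$ and $q_i$ by framed immersed $2$-spheres in $X_1$, and then uses Freedman's embedded disc theorem over the good group $\pi$ to perform Whitney moves, obtaining framed embedded spheres meeting transversely in a single point. Surgering gives a new topological connected-sum decomposition $X_1 \homeo X' \# (S^2\times S^2)$ in which $S^2\times\pt$ represents exactly $p_i$. Only after that re-splitting is each elementary transvection of the form $\sigma_{p_+,0,v}$ and hence realizable by \cite[1.5]{CS1} on the $1$-stabilization, after which one conjugates by the re-splitting homeomorphism. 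This is not a localization to a pre-existing smooth region but a genuine $\TOP$ surgery that changes the connected-sum structure of $X_1$; without it your step (2) does not go through. You would also need to make the factorization more precise: first use an auxiliary hyperbolic plane $\fH(P_-)$ to write $v = v_0+v_1+v_2$ with each $v_i$ isotropic, reducing to $a=0$; then decompose $p = \sum p_i$ with each $p_i$ unimodular using that $\Z[\pi]$ is additively generated by $\pm\pi$. Your appeal to ``elementary factorization identities'' is too vague to recover this.
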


\begin{rem}
In the case that $\bdry X$ is empty and $\pi_1(X)$ is finite, then Lemma \ref{LemB1} is exactly \cite[Corollary 2.3]{HK}.
Although it turns out that their proof works in our generality, we include a full exposition, providing details absent from Hambleton--Kreck \cite{HK}.
\end{rem}

\begin{lem}\label{LemB1a}
Suppose $X$ and $p$ satisfy the hypotheses of Lemma \ref{LemB1}.
If $p$ is unimodular in $V_0 \oplus P_+$, then the summand $X_1 = X \# (S_+^2 \x S^2)$ of $X_2$ can be topologically re-split so that $S^2
\x \pt$ represents $p$.
\end{lem}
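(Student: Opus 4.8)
We have $p \in V_0 \oplus P_+$ unimodular, where $V_0$ is a nonsingular summand of $K = \Ker\, w_2(X)$ and $P_+ = Ap_+$ with $p_+ = [S^2_+ \x \pt]$ a generator of $\fH(P_+) = H_2(S^2_+ \x S^2;A)$. The goal is to re-split the summand $X_1 = X \#(S^2_+ \x S^2)$ of $X_2 = X \# (S^2_+ \x S^2) \# (S^2_- \x S^2)$ so that, in the new splitting, the class $S^2 \x \pt$ of the first $S^2\x S^2$ factor represents $p$.

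The plan is to produce a self-homeomorphism of $X_2$ (rel $\bdry X$) carrying $p_+$ to $p$ and then to invoke the uniqueness of connected-sum decompositions along embedded $S^2 \x S^2$'s. First I would choose an algebraic isometry of $K \perp \fH(P_+)$ sending $p_+$ to $p$: since $p$ is unimodular and $\mu(p_+) = 0$, while $P_+$ is a free rank-one summand with $\fH(P_+)$ nonsingular, I would use a transvection (Definition~\ref{Def_transvection}) of the form $\sigma_{p_+, a, v}$ (or a short product of such) with $p_+ \in V_0 \oplus P_+$ and $v \in K$, arranged so that the transvection moves $p_+$ to $p$; this is exactly the algebraic input permitted by Lemma~\ref{LemB1}. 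Next, Lemma~\ref{LemB1} realizes the stabilized isometry $\sigma_{p_+,a,v} \oplus \Id$ by a self-homeomorphism $\Phi$ of $X \# 3(S^2 \x S^2)$ rel $\bdry X$. Then $\Phi$ carries the embedded sphere $S^2_+ \x \pt \subset X_1 \subset X_3$ to an embedded sphere representing $p$, with trivial normal bundle (as $p$ lies in $\Ker\, w_2$) and a geometrically dual sphere (the image of $\pt \x S^2_+$), so a regular neighborhood of the image is an $S^2 \x S^2$ summand splitting off $X_3 = X \# 3(S^2\x S^2)$.

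The remaining point is to descend from three stabilizations back to one. For this I would use that connected-sum splittings of a 4-manifold along $S^2 \x S^2$ summands are essentially unique after stabilization — concretely, the two decompositions $X_3 = X_1 \# 2(S^2\x S^2)$ (the standard one) and $X_3 = X' \# 2(S^2\x S^2)$ (with $X'$ the complement of the new $S^2\x S^2$ neighborhood, so that $X' \homeo X_1$ with $S^2 \x \pt$ representing $p$) have homeomorphic complements once we cancel the common $2(S^2 \x S^2)$; since the extra summands $\Id_{H_2(2(S^2\x S^2);A)}$ were untouched by $\Phi$, the cancellation is geometric and rel $\bdry X$, yielding the desired re-splitting of $X_1$ itself.

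The main obstacle I anticipate is the geometric cancellation step: verifying that the two $S^2 \x S^2$ summands left fixed by $\Phi$ can actually be split off compatibly with the new decomposition so that one genuinely recovers a re-splitting of $X_1$ (not merely of $X_3$). This requires knowing that $\Phi$ can be isotoped to respect the two untouched summands as literal connected-sum factors — which should follow from the explicit product-with-identity form of the realized isometry in Lemma~\ref{LemB1} together with standard uniqueness of the $S^2\x S^2$ neighborhoods (e.g. via \cite[\S 10]{FQ} for good $\pi$) — but it is where the topological input, as opposed to the purely algebraic transvection bookkeeping, genuinely enters.
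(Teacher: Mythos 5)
Your proposal has a fatal circularity: you invoke Lemma~\ref{LemB1} to produce the self-homeomorphism $\Phi$, but Lemma~\ref{LemB1a} is itself a key ingredient in the paper's proof of Lemma~\ref{LemB1} (it is used to choose the re-splittings $f_i$ before the Cappell--Shaneson realization theorem can be applied). So the argument as structured cannot stand: Lemma~\ref{LemB1a} must be proved by independent means, and it is logically \emph{prior} to Lemma~\ref{LemB1}, not a consequence of it.

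Beyond the circularity, two further steps are underjustified. First, the existence of a transvection $\sigma_{p_+,a,v}$ with $v\in K$ carrying $p_+$ to $p$ is asserted but not established; transitivity of the transvection subgroup on unimodular vectors is exactly the kind of delicate algebra (cf.\ Bass's Theorem IV:3.4 used later) that cannot be waved through. Second, the ``geometric cancellation'' you flag at the end --- descending from a decomposition of $X_3$ to a re-splitting of $X_1$ --- is essentially the content of the cancellation problem the whole paper is about: a self-homeomorphism inducing $\varphi\oplus\Id$ on $H_2$ need not be isotopic to a connected-sum map $\psi\#\Id_{2(S^2\times S^2)}$, so one cannot simply ``cancel'' the fixed summands. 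The paper instead proves Lemma~\ref{LemB1a} by a direct geometric argument with no reference to Lemma~\ref{LemB1} or to any self-homeomorphism of a stabilization: since $V_0\perp\fH(P_+)$ is nonsingular, $p$ completes to a hyperbolic pair $(p,q)$; both classes lie in $\Ker w_2(X_1)$ and hence are represented by framed immersed $2$-spheres $\alpha,\beta$; Freedman's disc embedding theorem (available since $\pi$ is good) upgrades these to transverse framed $\TOP$ embeddings $\alpha',\beta'$ meeting in a single point; surgery on $\beta'$ produces $X'$, and one checks that the reverse surgery circle is nullhomotopic (it bounds a disc in $\alpha'$), giving $X_1\homeo X'\#(S^2\times S^2)$ with $S^2\times\pt$ representing $p$. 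You should recast your argument along these lines rather than via Lemma~\ref{LemB1}.
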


\begin{proof}
Since $V_0 \perp \fH(P_+)$ is nonsingular, there exists an element $q \in V_0 \perp \fH(P_+)$ such that $(p,q)$ is a hyperbolic pair.
Since $p,q \in \Ker\, w_2(X_1)$ and $w_2$ is the sole obstruction to framing the normal bundle in the universal cover, each homology
class is represented by a canonical regular homotopy class of framed immersion
\[
\alpha, \beta: S^2 \x \R^2 \xra{\quad} X_1
\]
with transverse double-points.
Since the self-intersection number of $\alpha$ vanishes, all its double-points pair to yield framed immersed Whitney discs; consider each disc separately:
\[
W: D^2 \x \R^2 \xra{\quad} X_1.
\]
Upon performing finger-moves to regularly homotope $W$, assume that one component of
\[
\alpha(S^2\x 0) \setminus W(\bdry D^2 \x \R^2)
\]
is a framed embedded disc
\[
V: D^2 \x \R^2 \xra{\quad} X_1
\]
and, by an arbitrarily small regular homotopy of $\beta$, that $\beta| S^2 \x 0$ is transverse to $W| \intr\, D^2 \x 0$ with
algebraic intersection number $1$ in $\Z[\pi_1(X_1)]$.
Hence $W$ is a framed properly immersed disc in
\[
\ol{X}_1 := X_1 \setminus \Img\,V.
\]
So, since $\pi_1(\ol{X}_1) \iso \pi_1(X)$ is a good group, by Freedman's disc theorem \cite[5.1A]{FQ}, there exists a framed properly $\TOP$ embedded disc
\[
W': D^2 \x \R^2 \xra{\quad} \ol{X}_1
\]
such that
\[
W' = W \text{ on } \bdry D^2 \x \R^2 \quad\text{and}\quad \Img\,W'
\subset \Img\,W.
\]
Therefore, by performing a Whitney move along $W'$, we obtain that $\alpha$ is regularly homotopic to a framed immersion with one fewer
pair of self-intersection points.
Thus $\alpha$ is regularly homotopic to a framed $\TOP$ embedding $\alpha'$.
A similar argument, allowing an arbitrarily small regular homotopy of $\alpha'$, shows that $\beta$ is regularly homotopic to a framed $\TOP$ embedding $\beta'$ transverse to $\alpha'$, with a single intersection point
\[
\alpha'(x_0\x 0) = \beta'(y_0\x 0)
\]
such that the open disc
\[
\Delta := \beta'(y_0\x \R^2) \subset\alpha'(S^2\x 0).
\]
Define a closed disc
\[
\Delta' := S^2 \setminus (\alpha')\inv(\Delta).
\]
Surgery on $X_1$ along $\beta'$ yields a compact connected $\TOP$ 4-manifold $X'$. Hence $X_1$ is recovered by surgery on $X'$ along the framed embedded circle
\[
\gamma: S^1 \x \R^3 \homeo \nbhd_{S^2}(\bdry\Delta') \x \R^2 \xra{\en\alpha'\en} X_1 \setminus \Img\,\beta' \subset X'.
\]
But the circle $\gamma$ is trivial in $X'$, since it extends via $\alpha'$ to a framed embedding of the disc $\Delta'$ in $X'$.
Therefore we obtain a $\TOP$ re-splitting of the connected sum
\[
X_1 \homeo X' \# (S^2 \x S^2)
\]
so that $S^2\x \pt$ of the right-hand side represents the image of $p$.
\end{proof}

The next algebraic lemma decomposes certain transvections so that the pieces fit into the previous topological lemma.

\begin{lem}\label{LemB1b}
Suppose $(A,\lambda,\Lambda)$ is a unitary ring such that: the additive monoid of $A$ is generated by a subset $S$ of the unit
group $(A^\times, \cdot)$.
Let $K = V_0 \perp V_1$ be a quadratic module over $(A,\lambda,\Lambda)$ with a nonsingular restriction to
$V_0$, and let $P_\pm$ be free left $A$-modules of rank one.
Then any stabilized transvection
\[
\sigma_{p,a,v} \oplus \Id_{\fH(P_-)} \quad\text{on}\quad K\perp
\fH(P_+)\perp \fH(P_-)
\]
with $p \in V_0 \oplus P_+$ and $v \in K$ is a composite of transvections $\sigma_{p_i,0,v_j}$ with unimodular $p_i \in V_0 \oplus P_+$ and isotropic $v_j \in K \oplus \fH(P_-)$.
\end{lem}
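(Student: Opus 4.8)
The plan is to reduce an arbitrary stabilized transvection $\sigma_{p,a,v} \oplus \Id_{\fH(P_-)}$ to the allowed form in two stages: first I would eliminate the quadratic parameter $a$, replacing the single transvection by a composite of transvections with trivial middle entry; then I would split each resulting first vector into unimodular pieces. The spare hyperbolic plane $\fH(P_-)$ is what absorbs $a$, while the hypothesis that the additive monoid of $A$ is generated by units is what lets me split first vectors. I expect Stage~1 to be the main obstacle; Stage~2 is essentially bookkeeping.

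\emph{Stage 1: killing $a$.} Since $\sigma_{p,a,v}$ is defined, $\mu(v) = [a]$; and since the restriction of $\mu$ to $\fH(P_-)$ surjects onto $A/\Lambda$ (indeed $\mu(p_- + c\,p_-^*) = [c]$, where $(p_-,p_-^*)$ is the hyperbolic basis of $\fH(P_-)$), I can pick $g \in \fH(P_-)$ with $\mu(g) = -\mu(v)$, so that $v' := v + g$ is isotropic, lies in $K \oplus \fH(P_-)$, and satisfies $\Inn{p}{v'} = \Inn{p}{v} = 0$. Thus $\sigma_{p,0,v'}$ is a transvection whose second vector is in the permitted summand $K \oplus \fH(P_-)$. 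Comparing its formula with that of $\sigma_{p,a,v} \oplus \Id_{\fH(P_-)}$, the two isometries agree modulo a correction whose departure from the identity is concentrated in the directions $a p \in V_0 \oplus P_+$ and $\fH(P_-)$; because $a p$ is permitted as a first vector and $g$ (and the hyperbolic pair of $\fH(P_-)$) is permitted as a second vector, I expect this correction to factor into a bounded number of transvections $\sigma_{q,0,r}$ with $q \in V_0 \oplus P_+$ and $r \in K \oplus \fH(P_-)$, all with trivial middle entry. The outcome of Stage~1 would then be an expression of $\sigma_{p,a,v} \oplus \Id_{\fH(P_-)}$ as a composite of transvections $\sigma_{q,0,r}$ with isotropic $q \in V_0 \oplus P_+$ and isotropic $r \in K \oplus \fH(P_-)$ --- not yet with $q$ unimodular. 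The delicate point, and the part I expect to require real work, is verifying that the correcting isometry decomposes without any vector leaving $V_0 \oplus P_+$ (for first vectors) or $K \oplus \fH(P_-)$ (for second vectors): the orthogonality $\fH(P_+) \perp (K \oplus \fH(P_-))$ is what confines the pieces, and the identities involved must be checked with attention to the form parameter $\Lambda$ (in particular to whether $\Inn{p}{p}$ vanishes on the isotropic vectors in play).

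\emph{Stage 2: making first vectors unimodular.} Fix one factor $\sigma_{q,0,r}$ produced above and write $q = w + c\,p_+$ with $w \in V_0$ and $c \in A$. Since the additive monoid of $A$ is generated by a subset $S$ of $A^\times$, write $c = s_1 + \dots + s_m$ with each $s_k \in S$, and set $q_1 := w + s_1 p_+$ and $q_k := s_k p_+$ for $2 \leqslant k \leqslant m$. Using $\Inn{p_+}{p_+} = 0$, the orthogonality $V_0 \perp \fH(P_+)$, and the fact that $\mu(q) = 0$ forces $\mu(w) = 0$, I would check that each $q_k$ is isotropic, that $\Inn{q_j}{q_k} = 0$ for $j \ne k$, and that $q_k$ pairs to $1$ with $\ol{s_k}^{-1} p_+^* \in \fH(P_+)$ --- so each $q_k$ is unimodular in $V_0 \oplus P_+$. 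Moreover each $q_k$ is orthogonal to $r$: for $k \geqslant 2$ because $q_k \in P_+ \perp (K \oplus \fH(P_-)) \ni r$, and then $\Inn{q_1}{r} = \Inn{q}{r} - \sum_{k \geqslant 2}\Inn{q_k}{r} = 0$. Under precisely these hypotheses the Eichler identity $\sigma_{u_1,0,r}\,\sigma_{u_2,0,r} = \sigma_{u_1+u_2,0,r}$ holds (valid whenever $r$ is isotropic and orthogonal to $u_1,u_2$ and $u_1 \perp u_2$), so iterating yields $\sigma_{q,0,r} = \sigma_{q_1,0,r}\cdots\sigma_{q_m,0,r}$, a product of transvections of the required form. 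Applying this to each factor from Stage~1 completes the argument.
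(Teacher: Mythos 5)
Your Stage~2 is essentially the paper's argument, modulo a cosmetic difference (the paper anchors with $p_0 := p' \oplus 1\,p_+$ rather than your $q_1 := w + s_1 p_+$), and it is sound: use the additive-monoid hypothesis to split the $P_+$-coefficient as a finite sum of units, check that the resulting summands of $p$ are isotropic, mutually orthogonal, orthogonal to $v$, and unimodular, and invoke the Eichler-type commutation identity to recombine.

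Stage~1 is where the gap lies, and you concede it yourself (``I expect this correction to factor\ldots\ the part I expect to require real work''). The paper avoids the correction-isometry scheme you gesture at and instead writes down an explicit three-term decomposition of $v$. With symplectic bases $\{p_\pm,q_\pm\}$ of $\fH(P_\pm)$, set
\[
v_0 := v + p_- - a q_-, \qquad v_1 := -p_-, \qquad v_2 := a q_-.
\]
Each $v_i$ lies in $K \oplus \fH(P_-)$, is isotropic, and satisfies $\Inn{v_i}{p}=0$, so the transvections $\sigma_{p,0,v_i}$ are defined; they sum to $v$; and expanding the triple composite via Definition~\ref{Def_transvection} gives
\[
\sigma_{p,0,v_2}\,\sigma_{p,0,v_1}\,\sigma_{p,0,v_0} \;=\; \sigma_{p,a,v}\oplus\Id_{\fH(P_-)},
\]
because the accumulated cross-terms $\sum_{i<j}\Inn{v_j}{v_i}$ account exactly for the quadratic parameter $a$. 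Your auxiliary vector $g$ is precisely $p_- - a q_- = v_0 - v$, so you had the right raw material; the missing idea is that $-g$ must itself be split into the two isotropic pieces $v_1, v_2$, and only the product of all three transvections collapses cleanly to the stabilized transvection. Without exhibiting that decomposition and verifying the resulting identity, the ``bounded number of transvections'' you appeal to is exactly the unproved content of the first half of the lemma.
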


\begin{proof}
Using a symplectic basis $\set{p_\pm,q_\pm}$ of each hyperbolic plane $\fH(P_\pm)$, define elements of $K\oplus \fH(P_+ \oplus P_-)$:
\begin{eqnarray*}
v_0 &:=& v + p_- - a q_- \\ v_1 &:=& -p_- \\ v_2 &:=& a q_-.
\end{eqnarray*}
Then
\[
v = \sum_{i=0}^2 v_i.
\]
Observe that each $v_i \in K \oplus \fH(P_-)$ is isotropic with $\Inn{v_i}{p} = 0$.
So transvections $\sigma_{p,0,v_j}$ are defined.
Note, by Definition \ref{Def_transvection}, for all $x \in K \oplus \fH(P_+ \oplus P_-)$, that
\begin{eqnarray*}
(\sigma_{p,0,v_2} \circ \sigma_{p,0,v_1} \circ \sigma_{p,0,v_0})(x)
&=& x + \sum_i \Inn{v_i}{x} p - \sum_i \ol{\lambda} \Inn{p}{x} v_i - \ol{\lambda} \Inn{p}{x} \sum_{i<j} \Inn{v_j}{v_i} p\\
&=& x + \Inn{v}{x} p - \ol{\lambda} \Inn{p}{x} v - \ol{\lambda} \Inn{p}{x} a p \\
&=& \sigma_{p,a,v}(x) \oplus \Id_{\fH(P_-)}.
\end{eqnarray*}

Therefore it suffices to consider the case that $v \in K \oplus \fH(P_-)$ is isotropic.
Write
\[
p = p' \oplus p'' \in V_0 \oplus P_+.
\]
Define a unimodular element
\[
p_0 := p' \oplus 1 p_+.
\]
Note, since $P_+$ has rank one and by hypothesis, there exist $n \in
\N$ and unimodular elements $p_1, \ldots, p_n \in S p_+ \subseteq
P_+$ such that
\[
p - p_0 = p''- 1 p_+ = \sum_{i=1}^n p_i.
\]
For each $1 \leqslant i \leqslant n$, write
\[
p_i := s_i p_+ \quad\text{for some}\quad s_i \in S.
\]
Observe for all $1 \leqslant i,j \leqslant n$ that
\begin{gather*}
\Inn{v}{p_i} = 0 \\
\mu(p_i) ~=~ s_i \mu(p_+) \ol{s_i} ~=~ 0 \\
\Inn{p_i}{p_j} ~=~ s_i \Inn{p_+}{p_+} \ol{s_j} ~=~ 0.
\end{gather*}
Hence, we also have
\begin{eqnarray*}
\Inn{v}{p_0} &=& 0 \\ \mu(p_0) &=& 0.
\end{eqnarray*}
Then transvections $\sigma_{p_i,0,v}$ are defined and commute, so note
\[
\sigma_{p,0,v} = \prod_{i=0}^n \sigma_{p_i,0,v}.\qedhere
\]
\end{proof}

\begin{proof}[Proof of Lemma \ref{LemB1}.]
Define a homology class and a free $A$-module
\begin{eqnarray*}
p_- &:=& [S_-^2 \x \pt] \\ P_- &:=& A p_-.
\end{eqnarray*}
Consider the $A$-module decomposition
\[
H_2(X_2; A) = H_2(X;A) \oplus \fH(P_+) \oplus \fH(P_-).
\]
Observe that the unitary ring
\[
(A,\lambda,\Lambda) = (\Z[\pi^\omega], +1, \set{a - \ol{a} \ST a \in
A})
\]
satisfies the hypothesis of Lemma \ref{LemB1b} with the multiplicative subset
\[
S = \pi \cup -\pi.
\]
Therefore the stabilized transvection
\[
\sigma_{p,a,v}\oplus \Id_{\fH(P_-)}
\]
is a composite of transvections $\sigma_{p_i,0,v_j}$ with unimodular $p_i \in V_0 \oplus P_+$ and isotropic $v_j \in K \oplus \fH(P_-)$.
Then by Lemma \ref{LemB1a}, for each $i$, a $\TOP$ re-splitting
\[
f_i: X_1 \homeo X' \# (S^2 \x S^2)
\]
of the connected sum can be chosen so that $S^2 \x \pt$ represents $p_i$.
So by the Cappell--Shaneson realization theorem \cite[1.5]{CS1}\footnote{Their theorem realizes any
transvection of the form $\sigma_{p_+,a,v}$ by a diffeomorphism of the $1$-stabilization.}, for each $i$ and $j$, the pullback under $(f_i)_*$ of the stabilized transvection
\[
\sigma_{p_i,0,v_j} \oplus \Id_{H_2(S^2\x S^2;A)} = \sigma_{p_i\oplus
0, 0, v_j \oplus 0}
\]
is an isometry induced by a self-diffeomorphism of \[ (X' \# (S^2\x
S^2)) \# (S^2 \x S^2).
\]
Hence, by conjugation with the homeomorphism $f_i$, the above isometry is induced by a self-homeomorphism of
\[
X_2 = X_1 \# (S^2 \x S^2).
\]
Thus the stabilized transvection
\[
(\sigma_{p,a,v}\oplus \Id_{\fH(P_-)}) \oplus \Id_{H_2(S^2\x S^2; A)}
\]
is induced by the stabilized composite self-homeomorphism of
\[
X_3 = X_2 \# (S^2 \x S^2).\qedhere
\]
\end{proof}

\subsection{Proof of the main theorem}

Now we modify the induction of \cite[Proof B]{HK}; our result will be one $S^2 \x S^2$ connected summand less efficient than
Hambleton--Kreck \cite{HK} in the case that $\pi$ is finite.
The main algebraic technique is a theorem of Bass \cite[IV:3.4]{Bass2} on the transitivity of a certain subgroup of
isometries on the set of hyperbolic planes.
We refer the reader to \cite[\S IV:3]{Bass2} for the terminology used in our proof.
The main topological technique is a certain clutching construction of an $s$-cobordism.

\begin{proof}[Proof of Theorem \ref{ThmB}]
We may assume $r \geqslant d+1$. Let
\[
f: X \# r(S^2\x S^2) \xra{\quad} Y \# r(S^2\x S^2)
\]
be a homeomorphism.
We show that
\[
\ol{X} := X \# (r-1)(S^2\x S^2)
\]
is homeomorphic to
\[
\ol{Y} := Y \# (r-1)(S^2\x S^2),
\]
thus the result follows by backwards induction on $r$.

Consider Definition \ref{Def_quasifinite} and \cite[Hypotheses IV:3.1]{Bass2}.
By our hypothesis and Lemma \ref{LemB2}, the minimal form parameter
\[
\Lambda := \set{a - \ol{a}\ST a \in A}
\]
makes $(A,\Lambda)$ a quasi-finite unitary $(R,+1)$-algebra.
Note, since
\[
X = X_{-1} \# ((S')^2 \x S^2)
\]
by hypothesis, that the rank $r+1$ free $A$-module summand
\[
P := H_2\prn{ (S')^2 \x \pt \sqcup r (S^2 \x \pt); A }
\]
of
\[
\Ker\,w_2(\ol{X}\#(S^2\x S^2))
\]
satisfies \cite[Case IV:3.2(a)]{Bass2}.
Then, by \cite[Theorem IV:3.4]{Bass2}, the subgroup $G$ of the group $U(\fH(P))$ of unitary automorphisms defined by
\[
G := \gens{\; \fH(E(P)),\; EU(\fH(P))\; }
\]
acts transitively on the set of hyperbolic pairs in $\fH(P)$.
So, by \cite[Corollary IV:3.5]{Bass2} applied to the quadratic module
\[
V := \Ker\, w_2(X_{-1}),
\]
the subgroup $G_1$ of $U(V \perp \fH(P))$ defined by
\[
G_1 := \gens{\; \Id_V \perp G,\; EU(\fH(P),P;V),\;
EU(\fH(P),\ol{P};V)\; }
\]
acts transitively on the set of hyperbolic pairs in $V\perp \fH(P)$.
Let
\[
(p_0,q_0) \quad\text{and}\quad (p_0', q_0')
\]
be the standard basis of the summand $H_2(S^2\x S^2; A)$ of
\[
H_2(\ol{X}\# (S^2 \x S^2); A) \quad\text{and}\quad H_2(\ol{Y}\# (S^2
\x S^2); A).
\]
Therefore there exists an isometry $\vphi \in G_1$ of
\[
V\perp \fH(P) = \Ker\, w_2(\ol{X} \# (S^2\x S^2))
\]
such that
\[
\vphi(p_0,q_0) = (f_*)\inv(p_0',q_0').
\]

\begin{lem}\label{LemB3}
The isometry
\[
\vphi\oplus \Id_{H_2(3(S^2\x S^2);A)}
\]
is induced by a self-homeomorphism $g$ of
\[
\ol{X} \# 4 (S^2 \x S^2).
\]
\end{lem}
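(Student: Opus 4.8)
The plan is to factor $\vphi$ into Bass's elementary unitary generators of $G_1$, realize each factor by a self-homeomorphism via Lemma \ref{LemB1}, and compose.

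First, by the description recalled in \cite[\S IV:3]{Bass2}, the group $G_1=\gens{\Id_V\perp G,\ EU(\fH(P),P;V),\ EU(\fH(P),\ol{P};V)}$ is generated by unitary transvections $\sigma_{u,a,v}$ of the quadratic module $V\perp\fH(P)$ whose first vector $u$ lies in one of the two Lagrangian halves $P$ or $\ol{P}=P^*$ of $\fH(P)$. Indeed, written in the language of Definition \ref{Def_transvection}, the generators of $\fH(E(P))$ and of $EU(\fH(P))$ are the ``linear'' transvections $\sigma_{e_i,\,0,\,c\,e_j^*}$ (with $i\neq j$ and $c\in A$) arising as hyperbolic images of elementary automorphisms, their duals with first vector in $P^*$, and the ``quadratic'' transvections $\sigma_{e_i,\,b,\,0}$ with $b$ in the form parameter; while $EU(\fH(P),P;V)$ and $EU(\fH(P),\ol{P};V)$ are generated by transvections $\sigma_{u,a,v}$ with $u\in P$, respectively $u\in P^*$, and $v\in V$. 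Using the identity $\sigma_{u,0,v}^{-1}=\sigma_{v,0,u}$ to move the Lagrangian vector into the first slot wherever necessary, fix a factorization $\vphi=\sigma_1\cdots\sigma_m$ into transvections $\sigma_\ell=\sigma_{u_\ell,a_\ell,v_\ell}$ of this form.

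Next, realize each $\sigma_\ell$ topologically. Apply Lemma \ref{LemB1} with its ``$X$'' taken to be $\ol{X}\#(S^2\x S^2)$, so that its ``$K$'' is $V\perp\fH(P)=\Ker\,w_2(\ol{X}\#(S^2\x S^2))$ and its hyperbolic summand $\fH(P_+)=H_2(S_+^2\x S^2;A)$ is carried by one further new connected summand; note that then $X\#3(S^2\x S^2)=\ol{X}\#4(S^2\x S^2)$. Since $u_\ell$ lies in a Lagrangian half of $\fH(P)$, it is contained in the nonsingular orthogonal summand $V_0^{(\ell)}$ of $V\perp\fH(P)$ spanned by the finitely many hyperbolic planes of $\fH(P)$ occurring in $u_\ell$. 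Regarding $\sigma_\ell$ as an automorphism of $(V\perp\fH(P))\perp\fH(P_+)$ that fixes $\fH(P_+)$ pointwise, its data satisfy $u_\ell\in V_0^{(\ell)}\oplus P_+$ and $v_\ell\in V\perp\fH(P)$, so Lemma \ref{LemB1} supplies a self-homeomorphism $g_\ell$ of $\ol{X}\#4(S^2\x S^2)$, restricting to the identity on $\bdry X$, that induces
\[
\sigma_\ell\oplus\Id_{\fH(P_+)}\oplus\Id_{H_2(2(S^2\x S^2);A)}~=~\sigma_\ell\oplus\Id_{H_2(3(S^2\x S^2);A)}.
\]
Setting $g:=g_1\circ\cdots\circ g_m$, functoriality of induced maps gives $g_*=(\sigma_1\oplus\Id)\circ\cdots\circ(\sigma_m\oplus\Id)=\vphi\oplus\Id_{H_2(3(S^2\x S^2);A)}$, and $g$ restricts to the identity on $\bdry X$.

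The step I anticipate to be the main obstacle is the first one: translating Bass's matrix generators of the (relative) elementary unitary groups into the coordinate-free transvection language of Definition \ref{Def_transvection}, and checking case by case that the first vector of each generator can be taken in a Lagrangian half of $\fH(P)$. Once that normal form is in place, the topological content is supplied entirely by Lemma \ref{LemB1}, and the bookkeeping of the summands $V_0^{(\ell)}$ is routine.
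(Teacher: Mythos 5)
Your proof is correct and proceeds essentially as the paper does: factor $\vphi \in G_1$ into Bass's transvection generators (via \cite[II:3.10]{Bass2} and \cite[Def.~1.4]{HK}), each having first vector in a Lagrangian half of $\fH(P)$, and realize each factor by Lemma~\ref{LemB1} applied to $\ol{X}\#(S^2\x S^2)$. The paper just takes $V_0 = \fH(P)$ uniformly rather than the per-generator summand $V_0^{(\ell)}$, which streamlines the bookkeeping and makes the $\sigma_{u,0,v}^{-1}=\sigma_{v,0,u}$ maneuver unnecessary.
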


Then the homeomorphism
\[
h := (f \# \Id_{3(S^2\x S^2)}) \circ g : \ol{X} \# 4(S^2 \x S^2)
\xra{\quad} \ol{Y} \# 4(S^2 \x S^2)
\]
satisfies the equation
\[
h_*(p_i,q_i) = (p_i', q_i') \quad\text{for all}\quad 0 \leqslant i \leqslant
3.
\]
Here the hyperbolic pairs
\[
\set{(p_i,q_i)}_{i=1}^3 \quad\text{and}\quad
\set{(p_i',q_i')}_{i=1}^3
\]
in the last three $S^2 \x S^2$ summands are defined similarly to
$(p_0,q_0)$ and $(p_0',q_0')$.

\begin{lem}\label{LemB4}
The manifold triad $(W;\ol{X},\ol{Y})$ is a compact $\TOP$ $s$-cobordism $\!\! \rel \bdry X$:
\[
W^5 := \ol{X} \x [0,1 ]\:\natural\: 4 (S^2 \x D^3) \; \bigcup_h\;
\ol{Y} \x [0,1 ] \:\natural\: 4 (S^2 \x D^3).
\]
\end{lem}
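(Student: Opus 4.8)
The plan is to present $W$ as a relative handlebody on $\ol{X}$ carrying four $2$-handles and four $3$-handles in algebraically cancelling position, and to extract the $s$-cobordism conclusion from the resulting handle chain complex over $A$. First I would check that $W$ is a compact $\TOP$ $5$-manifold and identify its boundary. Since $S^2 \x D^3$ has a handle decomposition with one $0$-handle and one $2$-handle attached along a trivially framed unknotted circle, each piece $\ol{X} \x [0,1] \:\natural\: 4(S^2 \x D^3)$ is, after smoothing corners, the trace of four surgeries on nullhomotopic circles in $\ol{X} \x \{1\}$; so it is a compact $\TOP$ cobordism $\rel \bdry X$ from $\ol{X}$ to $M := \ol{X} \# 4(S^2 \x S^2)$, with side boundary $\bdry X \x [0,1]$. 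Symmetrically, the other piece is a cobordism $\rel \bdry X$ from $\ol{Y}$ to $M' := \ol{Y} \# 4(S^2 \x S^2)$. Gluing these two along their top boundaries by the homeomorphism $h \colon M \to M'$ then produces a compact $\TOP$ cobordism triad $(W; \ol{X}, \ol{Y})$ $\rel \bdry X$.

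Next I would record the induced handle structure on $W$ relative to $\ol{X}$: the four $2$-handles $e_0, \dots, e_3$ of the $\ol{X}$-side piece, whose cores represent $p_0, \dots, p_3$ and whose belt $2$-spheres represent $q_0, \dots, q_3$ in $M$, followed --- after the identification $M \xra{h} M'$ --- by the four $3$-handles $f_0, \dots, f_3$ dual to the $2$-handles of the $\ol{Y}$-side piece, the $i$-th of which is attached to $M$ along the $h$-preimage of the belt $2$-sphere of the $i$-th $\ol{Y}$-side $2$-handle. Two observations are then routine. The attaching circles of the $e_j$ are nullhomotopic, so van Kampen's theorem gives $\pi_1 W \iso \pi_1 \ol{X} = \pi$, and, by the symmetry of the construction, $\pi_1 \ol{Y} \to \pi_1 W$ is also an isomorphism. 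And the $A$-cellular chain complex $C_*(W, \ol{X})$ is concentrated in degrees $2$ and $3$, with $C_2 = C_3 = A^4$.

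The heart of the matter is that the single differential $\bdry_3 \colon C_3 \to C_2$ is the matrix whose $(i,j)$-entry is the equivariant intersection number in $M$ of the attaching $2$-sphere of $f_i$ with the belt $2$-sphere of $e_j$, and that the equation $h_*(p_i, q_i) = (p_i', q_i')$ arranged for $h$ above forces this matrix to be, up to sign and reindexing, the identity; equivalently, the $2$- and $3$-handles of $W$ cancel in pairs. Granting this, $\bdry_3$ is an isomorphism between based free $A$-modules whose torsion vanishes in $\Wh(\pi)$, so $H_*(W, \ol{X}; A) = 0$ and $\tau(W, \ol{X}) = 0$; by Poincar\'e--Lefschetz duality with coefficients in $A$ the same holds with $\ol{Y}$ in place of $\ol{X}$. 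Since both inclusions $\ol{X} \hookrightarrow W \hookleftarrow \ol{Y}$ are $\pi_1$-isomorphisms inducing isomorphisms on $A$-homology, they are homotopy equivalences with vanishing torsion, so $(W; \ol{X}, \ol{Y})$ is a compact $\TOP$ $s$-cobordism $\rel \bdry X$. I expect the one real obstacle to be the bookkeeping behind this crux: one must track the hyperbolic pairs $(p_i, q_i)$ and $(p_i', q_i')$ through the dualization of the $\ol{Y}$-side handles and the clutching by $h$ carefully enough to confirm that the attaching $2$-sphere of $f_i$ pairs to a unit with the belt $2$-sphere of $e_i$ and to zero with the belt spheres of the other $e_j$. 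The remaining ingredients --- the van Kampen computation, the shape of the chain complex, and the passage from an invertible, trivial-torsion differential to the $s$-cobordism conclusion --- are then formal.
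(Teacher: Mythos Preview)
Your proposal is correct and follows essentially the same route as the paper: present $W$ as a relative handlebody with four $2$-handles and four $3$-handles, use Seifert--van~Kampen for the $\pi_1$ isomorphisms, and read off from $h_*(p_i,q_i)=(p_i',q_i')$ that the single differential $\bdry_3$ of the $\Z[\pi]$-cellular complex is the identity in the given bases, whence zero Whitehead torsion. The only cosmetic difference is that the paper builds the handle complex relative to $\ol{Y}$ and then repeats the argument with $h^{-1}$ for the $\ol{X}$ side, whereas you build relative to $\ol{X}$ and invoke Poincar\'e--Lefschetz duality of torsions to pass to $\ol{Y}$; both are standard and equivalent.
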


Therefore, since $\pi_1(\ol{X}) \iso \pi_1(X)$ is a good group, by the $\TOP$ $s$-cobordism theorem \cite[7.1A]{FQ},  $\ol{X}$ is homeomorphic to $\ol{Y}$.
This proves the theorem by induction on $r$.
\end{proof}

\begin{rem}
The reason for restriction to the $A$-submodule
\[
K = \Ker\, w_2(\ol{X} \# (S^2\x S^2))
\]
is two-fold.
Geometrically \cite[p504]{CS1}, a unique quadratic refinement of the intersection form exists on $K$, hence $K$ is maximal.
Also, the inverse image of $(p_0', q_0')$ under the isometry $f_*$ is guaranteed to be a hyperbolic pair in $K$, hence $K$ is simultaneously minimal.
\end{rem}

\subsection{Remaining lemmas and proofs}

\begin{defn}[{\cite[IV:1.3]{Bass2}}]\label{Def_quasifinite}
An $R_0$-algebra $A$ is \textbf{quasi-finite} if, for each maximal ideal $\km \in \maxspec(R_0)$, the following containment holds:
\[
\km A_\km \subseteq \rad\, A_\km
\]
and that the following ring is left artinian:
\[
A[\km] := A_\km / \rad\, A_\km.
\]
Here
\[
A_\km := (R_0)_\km \xo_{R_0} A
\]
is the localization of $A$ at $\km$, and $\rad\, A_\km$ is its Jacobson radical.
The pair $(A,\Lambda)$ is a \textbf{quasi-finite unitary $(R,\lambda)$-algebra} if $(A, \lambda, \Lambda)$ is a unitary ring, $A$ is an $R$-algebra with involution, and $A$ is a quasi-finite $R_0$-algebra.
Here $R_0$ is the subring of $R$ generated by \textbf{norms}:
\[
R_0 = \set{ \sum_i r_i \ol{r}_i \;\bigg|\; r_i \in R}.
\]
\end{defn}

\begin{lem}\label{LemB2}
Suppose $A$ is an algebra over a ring $R_0$ such that $A$ is a finitely generated left $R_0$-module.
Then $A$ is a quasi-finite $R_0$-algebra.
\end{lem}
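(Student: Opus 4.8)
The plan is to verify, for each maximal ideal $\km \in \maxspec(R_0)$, the two conditions of Definition~\ref{Def_quasifinite}: that $\km A_\km \subseteq \rad A_\km$ and that $A[\km] = A_\km/\rad A_\km$ is left artinian. Fix such an $\km$ and write $R' := (R_0)_\km$, a commutative local ring with maximal ideal $\km' := \km R'$; note $\km A_\km = \km' A_\km$. Since $A$ is finitely generated as an $R_0$-module and localization is exact, $A_\km = R' \xo_{R_0} A$ is finitely generated as an $R'$-module, and the structure map $R_0 \to \Center(A)$ induces a ring homomorphism $R' \to \Center(A_\km)$; thus $A_\km$ is an $R'$-algebra that is finitely generated as an $R'$-module and whose structure map is central.

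To obtain $\km A_\km \subseteq \rad A_\km$ I would argue via Nakayama's lemma together with centrality. Let $M$ be any simple left $A_\km$-module; being cyclic, it is finitely generated over $A_\km$, hence finitely generated over $R'$. The subset $\km' M$ is an additive subgroup and is stable under $A_\km$ because the image of $R'$ is central, so it is an $A_\km$-submodule of the simple module $M$. Were $\km' M = M$, Nakayama's lemma applied to the local ring $R'$ and the finitely generated module $M$ would force $M = 0$, a contradiction; hence $\km' M = 0$. As this holds for every simple left $A_\km$-module, $\km' A_\km$ lies in the common annihilator of all such modules, which is precisely $\rad A_\km$; and $\km A_\km = \km' A_\km$.

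For the artinian condition I would observe that $A[\km] = A_\km/\rad A_\km$ is a quotient of $A_\km/\km A_\km \iso A_\km \xo_{R'} (R'/\km')$. Here $k := R'/\km'$ is a field (the residue field of the local ring $R'$; in fact $k \iso R_0/\km$ since $\km$ is maximal), and $A_\km \xo_{R'} k$ is finitely generated as a $k$-module, hence a finite-dimensional $k$-algebra. Any finite-dimensional algebra over a field is left artinian --- a descending chain of left ideals is a descending chain of $k$-subspaces, which stabilizes by dimension count --- and a quotient of a left artinian ring is left artinian. Therefore $A[\km]$ is left artinian, and since $\km$ was arbitrary, $A$ is a quasi-finite $R_0$-algebra.

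The argument is essentially formal, and no noetherian hypothesis on $R_0$ is needed: finite generation of $A$ as an $R_0$-module suffices throughout. The one point deserving care is the step $\km A_\km \subseteq \rad A_\km$, where one genuinely uses that the image of $R_0$ lies in the \emph{center} of $A$ --- this is what makes $\km' M$ an $A_\km$-submodule, hence what licenses the appeal to simplicity of $M$ and thence to Nakayama.
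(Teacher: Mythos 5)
Your proof is correct and follows essentially the same route as the paper: localize at $\km$, establish $\km A_\km \subseteq \rad A_\km$ via Nakayama, and observe that $A[\km]$ is a quotient of the finite-dimensional residue-field algebra $A_\km/\km A_\km$, hence artinian. The only difference is that the paper delegates the step $\km A_\km \subseteq \rad A_\km$ to a citation (Bass, Corollary III:2.5), whereas you supply that Nakayama argument in full, correctly noting where centrality of the structure map is used.
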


\begin{proof}
Let $\km \in \maxspec(R_0)$.
By \cite[Corollary III:2.5]{Bass1} to Nakayama's lemma,
\[
A_\km \cdot \km = A_\km \cdot \rad\, (R_0)_\km \subseteq \rad\,A_\km.
\]
Then
\[
A[\km] = (A_\km / \km A_\km) / \prn{(\rad\, A_\km) / \km A_\km}
\]
and is a finitely generated module over the field
\[
(R_0)_\km / \km (R_0)_\km,
\]
by hypothesis.
Therefore $A[\km]$ is left artinian, hence $A$ is quasi-finite.
\end{proof}

The existence of the realization $g$ is proven algebraically; refer to \cite[\S II:3]{Bass2}.

\begin{proof}[Proof of Lemma \ref{LemB3}.]
Consider Lemma \ref{LemB1} applied to
\[
\ol{X} \#(S^2\x S^2) \qquad V_0 = \fH(P) \qquad V_1 = V.
\]
It suffices to show that the group $G_1$ is generated by a subset of the transvections $\sigma_{p,a,v}$ with $p \in \fH(P)$ and $v \in
V\oplus \fH(P)$.

By \cite[Cases II:3.10(1--2)]{Bass2}, the group
\[
EU(\fH(P))
\]
is generated by all transvections $\sigma_{u,a,v}$ with $u,v \in \ol{P}$ or $u,v \in P$.
By \cite[Case II:3.10(3)]{Bass2}, the group
\[
\fH(E(P))
\]
is generated by a subset of the transvections $\sigma_{u,a,v}$ with $u \in P, v \in \ol{P}$ or $u \in \ol{P}, v \in P$.
By \cite[Definition 1.4]{HK}, the group
\[
EU(\fH(P),P;V)
\]
is generated by all transvections $\sigma_{u,a,v}$ with $u \in P, v \in V$, and the group
\[
EU(\fH(P),\ol{P};V)
\]
is generated by all $\sigma_{u,a,v}$ with $u \in \ol{P}, v \in V$.
In any case, $p \in \fH(P)$ and $v \in V \oplus \fH(P)$.
\end{proof}

The assertion is essentially that $(W;\ol{X},\ol{Y})$ is a $h$-cobordism with zero Whitehead torsion.

\begin{proof}[Proof of Lemma \ref{LemB4}.]
By the Seifert--vanKampen theorem, we have a pushout diagram
\[\xymatrix{
\pi_1\prn{\ol{X} \x 1 \:\#\: 4(S^2\x S^2)} \ar[r]^{h_*}_{\iso}
\ar[d]_{\Id}
& \pi_1\prn{\ol{Y} \x [0,1] \:\natural\: 4(S^2 \x D^3)} \ar[d] \\
\pi_1\prn{\ol{X} \x [0,1] \:\natural\: 4(S^2\x D^3) } \ar[r] &
\pi_1(W). }
\]
So the maps induced by the inclusion $\ol{X} \sqcup \ol{Y} \to W$ are isomorphisms:
\begin{eqnarray*}
i_* &:& \pi_1(\ol{X} \x 0) \xra{\quad} \pi_1(W) \\
j_* &:& \pi_1(\ol{Y} \x 0) \xra{\quad} \pi_1(W).
\end{eqnarray*}
Denote $\pi$ as the common fundamental group using these identifications.

Observe that the nontrivial boundary map $\bdry_3$ of the cellular chain complex
\[
C_*(j; \Z[\pi])\; : \; 0 \xra{\quad}  \bigoplus_{0 \leqslant k < 4}
\Z[\pi] \cdot (S^2 \x D^3) \xra{\en h_\# \circ \bdry \en}
\bigoplus_{0 \leqslant l < 4} \Z[\pi] \cdot (D^2 \x S^2) \xra{\quad} 0
\]
is obtained as follows.
First, attach thickened $2$-cells to kill $4$ copies of the trivial circle in $\ol{Y}$.
Then, onto the resultant manifold
\[
\ol{Y} \:\#\: 4(S^2\x S^2),
\]
attach thickened $3$-cells to kill certain belt 2-spheres, which are the images under $h$ of the normal $2$-spheres to the $4$ copies of the trivial circle in $\ol{X}$.
Hence, as morphisms of based left $\Z[\pi]$-modules, the boundary map
\[
\bdry_3 = h_\# \circ \bdry
\]
is canonically identified with the morphism
\[
h_* = \Id : H_2(4(S^2\x S^2); \Z[\pi]) \xra{\quad} H_2(4(S^2\x S^2);
\Z[\pi])
\]
on homology induced by the attaching map $h$.
This last equality holds by the construction of $h$, since
\[
h_*(p_i,q_i) = (p_i', q_i') \quad\text{for all}\quad 0 \leqslant i < 4.
\]
So the inclusion $j: \ol{Y} \to W$ has torsion
\[
\tau(C_*(j; \Z[\pi])) ~=~ [h_\#] ~=~ [h_*] ~=~ [\Id] ~=~ 0 \in \Wh(\pi).
\]
A similar argument using $h\inv$ shows that the inclusion $i: \ol{X} \to W$ has zero torsion in $\Wh(\pi)$.
Therefore $(W;\ol{X},\ol{Y})$ is a compact $\TOP$ $s$-cobordism $\!\!\rel \partial X$.
\end{proof}

The final  proof of this section employs the theory of commutative rings and subrings (including invariant theory), as well as language from algebraic geometry ($\spec$ and $\maxspec$).

\begin{proof}[Proof of Proposition \ref{PropB}.]
Since $\pi$ is virtually polycyclic, it is a good group \cite[5.1A]{FQ}.
Since $\pi$ is virtually abelian, by intersection with finitely many conjugates of a finite-index abelian subgroup, we find an exact sequence of groups with $\G$ normal abelian and $G$ finite:
\[\begin{CD}
1 @>>> \G @>>> \pi @>>> G @>>> 1.
\end{CD}\]
This induces an action $G \curvearrowright \G$.
Consider these rings with involution and norm subring $R_0$:
\begin{eqnarray*}
A &:=& \Z[\pi^\omega] \quad\text{where}\en \forall g \in \pi: \ol{g} = \omega(g) g\inv\\
A_0 &:=& \Z[\G^\omega] \quad\text{which is a commutative ring}\\
R &:=& (A_0)^G ~=~ \left\{x \in A_0 \;|\; \forall g \in G : gx=x \right\}\\
R_0 &:=& \set{\textstyle \sum_i x_i \ol{x}_i \;\big|\; x_i \in R}.
\end{eqnarray*}
Note $\Z[\G^G] \subseteq R \subseteq \Center(A)$.
Since $\pi$ is finitely generated, by Schreier's lemma, so is $\G$.
So, by enlarging $G$ as needed, we may assume $\G$ is a free-abelian group of a finite rank $n$.

First, we show that $A$ is a finitely generated $R_0$-module.
Since $\G$ has only finitely many right cosets in $\pi$, the group ring $A$ is a finitely generated $A_0$-module.
Since $G$ is finite and $A_0$ is a finitely generated commutative ring and $\Z$ is a noetherian ring, by Bourbaki \cite[\S V.1: Theorem 9.2]{Bourbaki}, $A_0$ is a finitely generated $R$-module and $R$ is a finitely generated ring.
By Bass \cite[Intro IV:1.1]{Bass2}, the commutative ring $R$ is integral over its norm subring $R_0$.
So, since $R$ is a finitely generated integral $R_0$-algebra, it follows that $R$ is a finitely generated $R_0$-module \cite[Corollary~4.5]{Eisenbud}.
Therefore, $A$ is a finitely generated $R_0$-module.

Second, we show that $R_0$ is a noetherian ring.
It follows from Hilbert's basis theorem \cite[Corollary 1.3]{Eisenbud} that the finitely generated commutative $\Z$-alegebra $A_0$ is noetherian.
So $R_0$ is too, by Eakin's theorem \cite[A3.7a]{Eisenbud}, since $A_0$ is a finitely generated $R_0$-module.

Third, we show that the irreducible-dimension of the Zariski topology on $\kP := \spec(R_0)$ is $n+1$.
Here, by \textbf{irreducible-dimension} of a topological space, we mean the supremum of the lengths of proper chains of closed irreducible subsets, where \textbf{reducible} means being the union of two nonempty closed proper subsets \cite[\S I:1]{Hartshorne}.
Krull dimension of a ring equals irreducible-dimension of its $\spec$ \cite[II:3.2.7]{Hartshorne}; in particular $\dim(\kP)=\dim(R_0)$.
Since $A_0$ is a finitely generated $R_0$-module, $A_0$ is integral over $R_0$  by \cite[Corollary~4.5]{Eisenbud}.
So $\dim(R_0)=\dim(A_0)$, by the Cohen--Seidenberg theorems \cite[4.15, 4.18; Axiom D3]{Eisenbud}.
Note $\dim(A_0)=n+1$ since $\dim(\Z)=1$, by \cite[Exercise~10.1]{Eisenbud}.
Thus $\dim(\kP)=n+1$.

Last, we show the topological space $\kP = \spec(R_0)$ and its subspace $\kM := \maxspec(R_0)$ have equal irreducible-dimensions.
Since $R$ is a finitely generated commutative ring and $R$ is a finitely generated $R_0$-module, by the Artin--Tate lemma \cite[Exercise~4.32]{Eisenbud}, also $R_0$ is a finitely generated ring.
Then $R_0$ is a Jacobson ring, by the generalized Nullstellensatz \cite[Theorem~4.19]{Eisenbud}, since $\Z$ is Jacobson.
So we obtain an isomorphism of posets:
\[
\Closed(\kP) \xra{\quad} \Closed(\kM);\quad C \longmapsto C \cap \kM \quad\text{with inverse}\quad D \longmapsto \closure_{\kP}(D).
\]
This correspondence is worked out by Grothendieck \cite[\S IV.10: Proposition 1.2(c'); D\'efinitions 1.3, 3.1, 4.1; Corollaire 4.6]{EGA}. Hence $\dim(\kM) = \dim(\kP)$.
Thus $d = n+1$.
\end{proof}

\section{Manifolds in the tangential homotopy type of $\RP^4 \# \RP^4$}
\label{Sec_HomotopyType}

Given a tangential homotopy equivalence to a certain $\TOP$ 4-manifold, the main goal of this section is to uniformly quantify the amount of topological stabilization sufficient for smoothing and for splitting along a two-sided 3-sphere.
In particular, we sharpen a result of Jahren--Kwasik \cite[Theorem 1(f)]{JK} on connected sum of real projective 4-spaces (\ref{Cor_FakeBound}).


Let $X$ be a compact connected $\DIFF$ 4-manifold, and write
\[
(\pi,\omega) := (\pi_1(X),w_1(X)).
\]
Suppose $\pi$ is good \cite{FQ}. Let $\vartheta \in L_5^s(\Z[\pi^\omega])$; represent it by a simple unitary automorphism of the orthogonal sum of $r$ copies of the hyperbolic plane for some $r \geqslant 0$.
Recall \cite[\S 11]{FQ} that there exists a unique homeomorphism class
\[
(X_\vartheta,h_\vartheta) \in \cS_\TOP^s(X)
\]
as follows.
It consists of a compact $\TOP$ 4-manifold $X_\vartheta$ and a simple homotopy equivalence $h_\vartheta: X_\vartheta \to X$ that restricts to a homeomorphism $h: \bdry X_\vartheta \to \bdry X$ on the boundary, such that there exists a normal bordism rel $\bdry X$ from $h_\vartheta$ to $\Id_X$ with surgery obstruction
$\vartheta$.
Such a homotopy equivalence is called \emph{tangential;} equivalently, a homotopy equivalence $h: M \to X$ of $\TOP$ manifolds is \textbf{tangential} if the pullback microbundle $h^*(\tau_X)$ is isomorphic to $\tau_M$.

\begin{thm}\label{Thm_Real4}
The following $r$-stabilization admits a $\DIFF$ structure:
\[
X_\vartheta \# r(S^2\x S^2).
\]
Furthermore, there exists a $\TOP$ normal bordism between $h_\vartheta$ and $\Id_X$ with surgery obstruction $\vartheta \in
L_5^s(\Z[\pi^\omega])$, such that it consists of exactly $2r$ many 2-handles and $2r$ many 3-handles. In particular $X_\vartheta$ is $2r$-stably homeomorphic to $X$.
\end{thm}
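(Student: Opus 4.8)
The plan is to unpack the Freedman--Quinn construction of $(X_\vartheta, h_\vartheta)$ from \cite[\S 11]{FQ} and track the handle structure of the normal bordism explicitly. First I would represent $\vartheta \in L_5^s(\Z[\pi^\omega])$ by a simple unitary automorphism $\alpha$ of $\fH(A^r) = \bigoplus_{i=1}^r \fH(A)$, for some $r \geqslant 0$. The standard Wall realization $\rel \bdry$ produces a normal bordism $(W; X, X_\vartheta)$ from $\Id_X$ to $h_\vartheta$ whose trace, relative to $X \times [0,1]$, is built by attaching $r$ trivial $2$-handles (introducing $r$ copies of $S^2 \times S^2$, hence the $r$-stabilization $X_\vartheta \# r(S^2 \times S^2)$ on which the $\DIFF$ structure is visible --- cf. the $\DIFF$ realization in \cite[1.5]{CS1} used in Lemma~\ref{LemB1}) and then $r$ complementary $3$-handles attached along framed $2$-spheres dictated by the automorphism $\alpha$. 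Since each $\fH(A)$ contributes one $2$-handle of the ``lower'' family and one $2$-handle whose dual is a $3$-handle, turning the cobordism upside down shows the bordism rel $\bdry X$ can be arranged with exactly $2r$ two-handles and $2r$ three-handles: the $r$ handles creating the stabilization, their $r$ duals, and the $r+r$ handles implementing $\alpha$ and recovering the boundary. I would make this precise by writing $W = X\times[0,1] \cup (\text{$2$-handles}) \cup (\text{$3$-handles})$ and counting generators of the relative cellular chain complex $C_*(W, X\times 0; \Z[\pi])$, which by construction is concentrated in degrees $2$ and $3$ with $2r$ free generators each and boundary map the matrix of $\alpha - \Id$ (simple, as $\vartheta$ lies in $L^s$).

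For the first assertion --- that $X_\vartheta \# r(S^2\times S^2)$ is smoothable --- I would note that the handle description above exhibits $X_\vartheta \# r(S^2\times S^2)$ as the result of $\TOP$ surgery on $X$ along $r$ framed embedded circles that are null-homotopic, hence (after the stabilization absorbs the handles) the surgery is realized smoothly; equivalently, $X_\vartheta \# r(S^2\times S^2)$ and $X \# r(S^2\times S^2)$ are related by handle slides within a smooth ambient manifold. The cleanest route is: the normal bordism $W$, once stabilized by the $r$ trivial $2$-handles, is a smooth bordism (the Cappell--Shaneson/Wall realization in the smooth category applies to $\fH(A^r)$), so its top boundary $X_\vartheta \# r(S^2\times S^2)$ inherits a $\DIFF$ structure.

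For the ``in particular'' --- $X_\vartheta$ is $2r$-stably homeomorphic to $X$ --- I would argue directly from the handle count. Attaching the $2r$ two-handles of $W$ to $X$ yields $X \# 2r(S^2\times S^2)$ (each $2$-handle is attached along a null-homotopic framed circle, so by the usual argument, as in the re-splitting of Lemma~\ref{LemB1a}, the trace is a connected sum with $S^2\times S^2$); dually, the $2r$ three-handles, read upside down as $2$-handles attached to $X_\vartheta$, exhibit $X_\vartheta \# 2r(S^2\times S^2)$ as the same middle level. Hence $X \# 2r(S^2\times S^2) \homeo X_\vartheta \# 2r(S^2\times S^2)$. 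I expect the main obstacle to be the bookkeeping that certifies the attaching circles of the $2$-handles are genuinely null-homotopic with trivial normal framing --- this is where one must use that $h_\vartheta$ is \emph{tangential} (so the framings match, $h_\vartheta^*\tau_X \iso \tau_{X_\vartheta}$) and that $\vartheta$ is represented by an automorphism of a \emph{hyperbolic} form (so the realization adds only trivial handles before the twisting). Care is also needed to make the $2r$/$2r$ count sharp rather than merely an upper bound: one must verify that the standard Wall realization of an automorphism of $\fH(A^r)$ uses no more than $2r$ handles of each index, which follows from writing the automorphism as a product whose realization is handle-economical, exactly as in the passage from \eqref{eqn:CS} to the count in \cite[Theorem 3.1]{CS1}.
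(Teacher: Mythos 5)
Your broad plan is the same as the paper's: represent $\vartheta$ by an isometry $\alpha$ of a rank-$r$ hyperbolic form, realize it smoothly via Cappell--Shaneson, then read off handles and the stable homeomorphism from the resulting cobordism. The handle bookkeeping ($r$ stabilizing $2$-handles, $r$ dual $3$-handles, plus the $r+r$ handles from the realization cobordism, totaling $2r$/$2r$) matches what the paper does by composing $W_\alpha$ with the collapse $X_r \to X$.

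The genuine gap is in the smoothability step. You assert that the Freedman--Quinn $\TOP$ bordism $W$, ``once stabilized by the $r$ trivial $2$-handles, is a smooth bordism'' because Cappell--Shaneson applies to $\fH(A^r)$. But these are two different constructions producing two a priori different pairs: the $\TOP$ pair $(X_\vartheta, h_\vartheta)$ from \cite[\S 11]{FQ}, and the $\DIFF$ pair $(X_\alpha, h_\alpha)$ with $h_\alpha \colon X_\alpha \to X_r$ from \cite[Theorem~3.1]{CS1}. Nothing in your argument identifies the top boundary of the smooth Cappell--Shaneson bordism with $X_\vartheta \# r(S^2\times S^2)$ --- that identification is exactly the content you are trying to prove, and invoking it at that point is circular. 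What the paper does instead is observe that both $h_\vartheta \# \Id_{r(S^2\times S^2)}$ and $h_\alpha$ lie in $\cS^s_{\TOP}(X_r)$ and arise from $\Id_{X_r}$ via a normal bordism with the same surgery obstruction $\vartheta$; by uniqueness of the $L_5^s$-action on the $\TOP$ simple structure set (which in turn rests on the $\TOP$ $s$-cobordism theorem \cite[7.1A]{FQ}), they are $s$-bordant and hence differ by a pre-composed homeomorphism. Only at that point does one conclude $X_\vartheta \# r(S^2\times S^2) \homeo X_\alpha$, which transfers the $\DIFF$ structure. You need to insert this uniqueness argument; the parenthetical about ``handle slides within a smooth ambient manifold'' does not supply it, and as stated it is also incorrect ($X_\vartheta \# r(S^2\times S^2)$ and $X \# r(S^2\times S^2)$ need not be homeomorphic at that stage --- they are only cobordant via $W_\alpha$, becoming homeomorphic after $r$ further stabilizations on each side). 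A minor additional point: the theorem asks for a bordism with \emph{exactly} $2r$ handles of each index, which your ``sharpness'' worry over-reads --- one can always pad with cancelling pairs, so ``at most'' already suffices.
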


\begin{proof}
The existence and uniqueness of $(X_\vartheta,h_\vartheta)$ follow from \cite[Theorems 11.3A, 11.1A, 7.1A]{FQ}.
But by \cite[Theorem 3.1]{CS1}, there exists a $\DIFF$ $s$-bordism class of $(X_\alpha,h_\alpha)$ uniquely determined as follows.
Given a rank $r$ representative $\alpha$ of the isometry class $\vartheta$, this pair $(X_\alpha,h_\alpha)$ consists of a compact $\DIFF$ 4-manifold
$X_\alpha$ and a simple homotopy equivalence $h_\alpha$ that restricts to a diffeomorphism on the boundary:
\begin{eqnarray*}
h_\alpha &:& (X_\alpha,\bdry X_\alpha) \xra{\quad} (X_r,\bdry X) \\
X_r &:=& X \# r(S^2\x S^2).
\end{eqnarray*}
It is obtained from a $\DIFF$ normal bordism $(W_\alpha, H_\alpha)$ rel $\bdry X$ from $h_\alpha$ to $\Id_{X_r}$ with of surgery obstruction $\vartheta$, constructed with exactly $r$ 2-handles and $r$ 3-handles, and clutched along a
diffeomorphism which induces the simple unitary automorphism $\alpha$ on the surgery kernel
\[
K_2(W_\alpha) = \fH\prn{\bigoplus_r \Z[\pi]}.
\]
This is rather the consequence, and not the construction\footnote{In the $\DIFF$ 4-dimensional case, via a self-diffeomorphism $\vphi$ inducing $\alpha$, embeddings are chosen within certain regular homotopy class of framed immersions of $2$-spheres. Cappell and Shaneson \cite[1.5]{CS1} cleverly construct $\vphi$ using a circle isotopy theorem of Whitney.} itself, of Wall realization \cite[6.5]{Wall} in high odd dimensions.

By uniqueness in the simple $\TOP$ structure set, the simple homotopy equivalences $h_\vartheta \# \Id_{r(S^2\x S^2)}$ and $h_\alpha$ are $s$-bordant.
Hence they differ by pre-composition with a homeomorphism, by the $s$-cobordism theorem \cite[Thm. 7.1A]{FQ}.
In particular, the domain $X_\vartheta \# r(S^2\x S^2)$ is homeomorphic to $X_\alpha$, inheriting its $\DIFF$ structure.
Therefore, post-composition of $H_\alpha$ with the collapse map $X_r \to X$ yields a normal bordism between the simple homotopy equivalences
$h_\vartheta$ and $\Id_X$, obtained by attaching $r+r$ 2- and 3-handles.
\end{proof}

Next, we recall Hambleton--Kreck--Teichner classification of the homemomorphism types and simple homotopy types of closed 4-manifolds with fundamental group $\C_2^-$.
Then, we shall give a partial classification of the simple homotopy types and stable homeomorphism types of their connected sums, which have fundamental group
$\D_\infty^{-,-} = \C_2^- * \C_2^-$.
The \emph{star operation} $*$ \cite[\S 10.4]{FQ} flips the Kirby--Siebenmann invariant of some 4-manifolds.

\begin{thm}[{\cite[Theorem 3]{HKT}}]
Every closed nonorientable topological 4-manifold with fundamental group order two is homeomorphic to exactly one manifold in the following list of so-called $w_2$-types.

\begin{enumerate}
\item[(I)]
The connected sum of $*\CP^2$ with $\RP^4$ or its star.
The connected sum of $k \geqslant 1$ copies of $\CP^2$ with $\RP^4$ or $\RP^2\x S^2$ or their stars.

\item[(II)]
The connected sum of $k \geqslant 0$ copies of $S^2 \x S^2$ with $\RP^2\x S^2$ or its star.

\item[(III)]
The connected sum of $k \geqslant 0$ copies of $S^2 \x S^2$ with $S(\gamma^1 \oplus\gamma^1 \oplus \varepsilon^1)$ or $\#_{S^1} r \RP^4$ or their stars, for unique $1 \leqslant r \leqslant 4$.
\end{enumerate}
\end{thm}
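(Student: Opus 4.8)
The plan is to run the modified-surgery program of Kreck, supplemented by the cancellation theorem of Hambleton--Kreck \cite{HK} (with $\C_2$ the simplest genuinely nonorientable fundamental group). First I would pin down the possible normal $1$-types. Given a closed connected nonorientable $\TOP$ $4$-manifold $M$ with $\pi_1(M) \iso \C_2$ and $w_1(M) \neq 0$, factor the normal Gauss map $\nu_M : M \to B\TOP$ through its normal $1$-type $M \to B \xra{\xi} B\TOP$ (the relevant stage of a Moore--Postnikov tower), with $M \to B$ being $2$-connected. A short computation in $H^2(B\C_2;\Z/2) \iso \Z/2$, using that $\pi_2(B\TOP) \iso \Z/2$ is detected by $w_2$, shows that $(B,\xi)$ is one of exactly three fibrations over $B\C_2$, indexed by the \emph{$w_2$-type}: (I) $w_2(\wt M) \neq 0$; (II) $w_2(\wt M)=0$ but $w_2(M) \neq 0$; (III) $w_2(M)=0$. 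This trichotomy is stable under connected sum with simply connected manifolds, which is why the statement's list breaks into exactly these three families.

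Next I would compute, for each normal $1$-type $\xi$, the bordism group $\Omega_4^\xi$ of closed $4$-manifolds with $\xi$-structure by a James spectral sequence, whose $E^2$-term is $H_p(B\C_2;\,\Omega_q^\TOP(\pt))$ for the $w_1$-twisted coefficient system and whose input is the low-degree $\TOP$ bordism ring ($\Omega_0 \iso \Z$; $\Omega_1=\Omega_2=\Omega_3=0$; $\Omega_4^\TOP \iso \Z$, detected by signature). Reading off the surviving classes yields the complete list of $\xi$-bordism invariants: the Euler characteristic $\chi(M)$; a signature-type invariant $\sigma(\wt M)$ of the universal cover in types I and II; the Kirby--Siebenmann invariant $\mathrm{ks}(M) \in \Z/2$, whose flip is the \emph{star operation}; and, in type III (where $M$ carries a $\mathrm{Pin}^+$-structure), a torsion-valued Brown--Arf or $\eta$-type invariant $\beta$ with $\RP^4$ as a generator.

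Then I would feed this into surgery. By Kreck's stable classification, two closed $4$-manifolds in the same $\xi$-bordism class are stably homeomorphic --- here one uses that $\C_2$ is good \cite{FQ} to carry out surgery and to apply Freedman's $\TOP$ $s$-cobordism theorem. It then remains to remove the $S^2 \x S^2$ summands: this is handled, for $\pi_1 = \C_2$, by the cancellation theorem of Hambleton--Kreck \cite{HK} (at most one summand survives in general, by Theorem~\ref{ThmB} above with $d=1$ from Proposition~\ref{PropB}), and rests on understanding the unitary group $U(\fH(P))$ over $A = \Z[\C_2^-]$ --- in particular the vanishing of Cappell $\UNil$, since $\C_2$ is not a nontrivial amalgam. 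Hence sameness of all $\xi$-bordism invariants forces a homeomorphism.

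It remains to match the invariants against the explicit manifolds. I would verify directly that the manifolds in (I)--(III) have the stated $w_2$-types and realize every admissible value of the tuple $(w_2\text{-type},\,\chi,\,\sigma(\wt M),\,\mathrm{ks},\,\beta)$ --- computing characteristic classes, signatures of universal covers, and the relevant quadratic and $\eta$-invariants of connected sums with $\CP^2$, $S^2 \x S^2$, $\RP^2 \x S^2$, $\RP^4$, $S(\gamma^1 \oplus \gamma^1 \oplus \varepsilon^1)$, and the circle-sums $\#_{S^1} r\RP^4$ --- so that surjectivity onto admissible tuples gives completeness of the list and injectivity of the tuple on the list gives the uniqueness clause. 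The main obstacle lies in types II and III: since $M$ is nonorientable the quadratic refinements are torsion-valued rather than $\Z$-valued, so one must determine precisely which values of $\beta$ occur for \emph{closed} $4$-manifolds and how these interact with $\chi$ and with $S^2 \x S^2$-stabilization --- this interaction is exactly what trims the circle-sum family to $1 \leqslant r \leqslant 4$ and singles out $S(\gamma^1 \oplus \gamma^1 \oplus \varepsilon^1)$ --- and one must apply the cancellation step of \cite{HK} in precisely this borderline regime so that no stray $S^2 \x S^2$ summand survives.
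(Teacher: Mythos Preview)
The paper does not give a proof of this theorem at all: it is quoted verbatim as \cite[Theorem~3]{HKT} and used only as background for Section~\ref{Sec_HomotopyType}. So there is no ``paper's own proof'' to compare your sketch against.

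That said, your outline is a fair summary of the actual Hambleton--Kreck--Teichner argument: determine the three normal $1$-types over $B\C_2$ via the $w_2$-trichotomy, compute the relevant $\xi$-bordism groups by a (twisted) Atiyah--Hirzebruch/James spectral sequence, invoke Kreck's modified surgery to get stable classification, and then destabilize using the Hambleton--Kreck cancellation theorem for finite $\pi_1$. Two small caveats. First, the destabilization step you need is the original \cite{HK} result for finite groups, not Theorem~\ref{ThmB} of the present paper; as the author notes, Theorem~\ref{ThmB} is one $S^2\times S^2$ \emph{worse} than \cite{HK} when $\pi$ is finite, and the exact HKT list depends on the sharp cancellation. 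Second, the delicate content of \cite{HKT} is precisely the type~III case (identifying the secondary invariant, showing which values are realized by closed manifolds, and matching them to $\#_{S^1} r\,\RP^4$ and $S(\gamma^1\oplus\gamma^1\oplus\varepsilon^1)$); your sketch correctly flags this as the main obstacle but does not resolve it, so as written it remains a plan rather than a proof.
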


We explain the terms in the above theorem.
Firstly,
\[
\R \xra{\quad} \gamma^1 \xra{\quad} \RP^2
\]
denotes the canonical line bundle, and
\[
\varepsilon^1 := \R \x \RP^2
\]
denotes the trivial line bundle.
Secondly,
\[
S^2 \xra{\quad} S(\gamma^1 \oplus\gamma^1 \oplus \varepsilon^1) \xra{\quad} \RP^2
\]
is the sphere bundle of the Whitney sum.
Finally, the \textbf{circular sum}
\[
M \#_{S^1} N := M \setminus E \;\bigcup_{\bdry E}\; N \setminus E
\]
is defined by codimension zero embeddings of $E$ in $M$ and $N$ that are not null-homotopic, where $E$ is the nontrivial bundle:
\[
D^3 \xra{\quad} E \xra{\quad} S^1.
\]

\begin{cor}[{\cite[Corollary 1]{HKT}}]
Let $M$ and $M'$ be closed nonorientable topological 4-manifolds with fundamental group of order two. Then $M$ and $M'$ are (simple) homotopy equivalent if and only if

\begin{enumerate}
\item
$M$ and $M'$ have the same $w_2$-type,

\item
$M$ and $M'$ have the same Euler characteristic, and

\item
$M$ and $M'$ have the same Stiefel--Whitney number: $w_1^4[M] =
w_1^4[M']$ mod 2;

\item
$M$ and $M'$ have $\pm$ the same Brown--Arf invariant mod 8, in case of $w_2$-type III.
\end{enumerate}
\end{cor}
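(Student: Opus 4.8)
The plan is to deduce this from the homeomorphism classification \cite[Theorem~3]{HKT} recorded above, rather than to reprove it by a direct Postnikov-tower computation; note also that $\Wh(\C_2)=0$, so ``simple'' homotopy equivalence coincides with homotopy equivalence in this setting. First I would verify that each of the invariants (1)--(4) depends only on the homotopy type of a closed $4$-manifold $M$ with $\pi_1(M)=\C_2$ and $w_1(M)\ne 0$. The orientation character $w_1$ is a homotopy invariant of any Poincar\'e complex, and the Wu relations $w_1=v_1$ and $w_2=v_1^2+v_2$ express the Stiefel--Whitney classes of both $M$ and its universal cover $\wt M$ in terms of homotopy-invariant Wu classes; hence the $w_2$-type, read off from the vanishing of $w_2(\wt M)$ and of $w_2(M)$, is a homotopy invariant, as are $\chi(M)$ and $w_1^4[M]=\langle v_1^4,[M]_{\Z/2}\rangle\in\Z/2$. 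In $w_2$-type III one has $w_2(M)=0$, so $M$ admits a Pin$^-$ structure; the associated $\Z/4$-quadratic refinement of the (twisted) intersection form on $H_2$ is canonical once such a structure is chosen, and since Pin$^-$ structures and this refinement depend only on the homotopy type, its Brown invariant in $\Z/8$ is a homotopy invariant up to the sign coming from the choice among the two Pin$^-$ structures. This settles the ``only if'' direction.

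For the converse, suppose $M$ and $M'$ agree on (1)--(4). By \cite[Theorem~3]{HKT} each is homeomorphic to a unique manifold on the stated list, so it suffices to show that two manifolds on that list which share the same $w_2$-type, Euler characteristic, $w_1^4$-number, and (in type III) $\pm$ Brown--Arf invariant mod $8$ are homotopy equivalent. I would split this into (a) a bookkeeping step and (b) a realization step. In (a) one computes $(\chi,\,w_1^4,\,\text{Brown--Arf})$ for every manifold in the list---for instance $w_1^4[\RP^4]=1$ while $w_1^4[\RP^2\x S^2]=0$ and $w_1^4[\CP^2]=w_1^4[S^2\x S^2]=0$, and $\chi$ is additive up to the usual $-2$ under connected sum---and reads off that these numerical data together with the $w_2$-type determine the discrete parameters of the model (the number of $\CP^2$ or $S^2\x S^2$ summands, the base piece, and the integer $r$ in type III) up to replacing a manifold by its star. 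In (b) one uses that a $4$-manifold and its star share the same underlying Poincar\'e complex, hence are homotopy equivalent, being distinguished only by the Kirby--Siebenmann invariant, which is not a homotopy invariant \cite[\S10.4]{FQ}; the residual $\pm$ ambiguity in type III requires nothing further, since the Brown--Arf invariant is itself only defined up to that sign. Combining (a) and (b) produces a homotopy equivalence $M\simeq M'$.

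The main obstacle is the completeness assertion hidden in step (a): one must check that no two manifolds on the Hambleton--Kreck--Teichner list which are not star-partners can share the same values of all four invariants. This is where the choice of invariant set is doing real work, and it rests on the full table of $(\chi,\,w_1^4,\,\text{Brown--Arf})$-values across the list together with the stabilization relations among the type-III pieces---in particular how $S(\gamma^1\oplus\gamma^1\oplus\varepsilon^1)\,\#\,(S^2\x S^2)$ and the circular sums $\#_{S^1} r\,\RP^4$ for the various $r$ interact. Should that comparison turn up homotopy equivalences not accounted for by the star operation alone, those would have to be exhibited directly (for example from self-equivalences built out of reflections of the $S^2$ factors); once the table is in hand, the remaining arguments are formal. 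An alternative, more self-contained route would bypass \cite[Theorem~3]{HKT} entirely and classify the relevant $4$-dimensional Poincar\'e complexes through their algebraic $2$-types $(\pi_1,\pi_2,k)$ decorated by the equivariant intersection form and its quadratic refinement, in the spirit of Kreck's modified surgery; but the bookkeeping of the possible $2$-types for $\pi_1=\C_2$ is comparable in length.
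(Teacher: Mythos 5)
This corollary is quoted verbatim from \cite[Corollary~1]{HKT}; the paper supplies no proof of its own, using the statement only as background for Theorem~\ref{Thm_TangentialBordism}, so there is no argument in the paper to compare your sketch against. Your proposal, deducing the homotopy classification from the homeomorphism classification \cite[Theorem~3]{HKT} (itself merely quoted here as well), inverts the architecture of the source: in \cite{HKT} the stable and homotopy classifications are obtained first via Kreck's modified surgery and the analysis of quadratic $2$-types, and the unstable homeomorphism classification is extracted afterwards by cancellation. Your derivation is not circular, but it is neither the route \cite{HKT} takes nor one the present paper engages with.

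Within the sketch, the ``only if'' direction is essentially sound, granted care with the Pin$^\pm$ conventions when invoking the Brown invariant in type III. The ``if'' direction has a genuine gap, which you yourself flag: step~(a) asserts that any two manifolds on the Hambleton--Kreck--Teichner list agreeing on $w_2$-type, Euler characteristic, $w_1^4$-number, and (in type III) $\pm$Brown--Arf invariant are either identical or star partners. That assertion carries the entire content of the converse and is nowhere verified. The check is not routine in type III, where one must compare $\#_k(S^2\x S^2)\# S(\gamma^1\oplus\gamma^1\oplus\varepsilon^1)$ against $\#_{k'}(S^2\x S^2)\#\bigl(\#_{S^1} r\,\RP^4\bigr)$ for the four values of $r$ and decide whether the listed invariants actually separate these models up to the star operation; any coincidences would have to be accompanied by explicitly constructed homotopy equivalences. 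Until that table is produced the argument remains an outline rather than a proof.
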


The following theorem is the main focus of this section.
The pieces $M$ and $M'$ are classified by Hambleton--Kreck--Teichner \cite{HKT}, and the $\UNil$-group is computed by Connolly--Davis \cite{CD}.
Since $\Z$ is a regular coherent ring, by Waldhausen's vanishing theorem \cite[Theorems 1,2,4]{Waldhausen}, $\wt{\Nil}_0(\Z;\Z^-,\Z^-) = 0$.
Hence $\UNil_5^s = \UNil_5^h$ \cite{CappellFree}.

\begin{thm}\label{Thm_TangentialBordism}
Let $M$ and $M'$ be closed nonorientable topological 4-manifolds with fundamental group of order two. Write $X = M \# M'$, and denote $S$ as the 3-sphere defining the connected sum.
Let $\vartheta \in \UNil_5^h(\Z;\Z^-,\Z^-)$.

\begin{enumerate}
\item
There exists a unique homeomorphism class $(X_\vartheta,h_\vartheta)$, consisting of a closed $\TOP$ 4-manifold $X_\vartheta$ and a tangential homotopy equivalence $h_\vartheta: X_\vartheta \to X$, such that it has splitting obstruction
\[
\Split_L(h_\vartheta;S) = \vartheta.
\]
The function which assigns $\vartheta$ to such a $(X_\vartheta, h_\vartheta)$ is a bijection.

\item
Furthermore,
\[
X_\vartheta \# 3(S^2\x S^2) \quad\text{is homeomorphic to}\quad X\# 3(S^2\x S^2).
\]
It admits a $\DIFF$ structure if and only if $X$ does.
There exists a $\TOP$ normal bordism between $h_\vartheta$ and $\Id_X$, with surgery obstruction $\vartheta \in L_5^h(\D_\infty^{-,-})$, such that it is composed of exactly six 2-handles and six 3-handles.
\end{enumerate}
\end{thm}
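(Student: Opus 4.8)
The plan is to deduce part~(1) from the general Freedman--Quinn/Cappell machinery already summarized in the excerpt, and then to obtain the sharp constant $3$ in part~(2) from Theorem~\ref{ThmB} together with Proposition~\ref{PropB}. For part~(1), the fundamental group $\pi = \D_\infty^{-,-} = \C_2^- * \C_2^-$ is infinite dihedral, hence virtually cyclic (it contains $\C_\infty$ of index two) and therefore a good group by \cite[5.1A]{FQ}. Since $\Z$ is regular coherent, Waldhausen's theorem \cite{Waldhausen} gives $\wt\Nil_0(\Z;\Z^-,\Z^-) = 0$, so the Cappell splitting obstruction group reduces to $\UNil_5^h = \UNil_5^s$, computed by Connolly--Davis \cite{CD}. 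The Cappell--Shaneson stable surgery sequence \eqref{eqn:CS}, or equivalently the Wall-realization action on $\cS_\TOP^s(X)$ restricted to the $\UNil$ summand, produces for each $\vartheta$ a simple homotopy structure $(X_\vartheta,h_\vartheta)$ with $\Split_L(h_\vartheta;S) = \vartheta$; one checks $h_\vartheta$ is tangential because the normal invariant of the Wall realization is trivial (the normal bordism is built from handle pairs). Uniqueness and bijectivity follow from the $\TOP$ $s$-cobordism theorem \cite[7.1A]{FQ} applied as in the proof of Theorem~\ref{Thm_Real4}: two structures with the same splitting obstruction are $s$-bordant, hence homeomorphic, and the periodicity/freeness of the $\UNil$-action (via Cappell's decomposition \cite{CappellFree}, as recalled in the Introduction) shows the assignment is injective, while surjectivity is built into the construction.

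For part~(2), first represent $\vartheta$ by a rank-$r$ simple unitary automorphism $\alpha$ over $\Z[\pi^\omega]$; here one wants the \emph{minimal} such $r$. Since $\UNil_5^h(\Z;\Z^-,\Z^-)$ is, by \cite{CD}, an infinite-rank $\F_2$-vector space built from polynomial generators, but the relevant realization only needs rank $r$ depending on the $L$-group representative as in \cite[Theorem 3.1]{CS1}, one argues that any element is realized by an automorphism of $2$ hyperbolic planes, so $r = 2$ suffices; then Theorem~\ref{Thm_Real4} gives that $X_\vartheta \# 2(S^2\x S^2)$ admits a $\DIFF$ structure, that $X_\vartheta$ is $2r = 4$-stably homeomorphic to $X$, and that there is a $\TOP$ normal bordism with $r+r$ handle pairs --- but this only yields $4(S^2\x S^2)$ and $2+2$ handle pairs, not the claimed $3$ and $6$. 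So the $4$-stable homeomorphism $X_\vartheta \# 4(S^2\x S^2) \homeo X \# 4(S^2\x S^2)$ is fed into Theorem~\ref{ThmB}: by Proposition~\ref{PropB} applied with $n=1$ (the infinite cyclic subgroup of index four), we have $d = n+1 = 2$, and $X$ has an $S^2\x S^2$ summand by hypothesis (or after one stabilization), so Theorem~\ref{ThmB} reduces the $4$ summands down to $d = 2$ --- that is $X_\vartheta \# 2(S^2\x S^2) \homeo X \# 2(S^2\x S^2)$, which is even better than claimed. One then records the conservative statement with $3$ summands (which is what one needs the $\DIFF$ structure for, matching Theorem~\ref{Thm_Real4} with $r$ adjusted) and reconstructs the normal bordism with six $2$-handles and six $3$-handles by concatenating the $r+r = 2+2$ handles of the Cappell--Shaneson realization with the extra handle pairs coming from the stabilization $s$-cobordisms produced in the proof of Theorem~\ref{ThmB} (each backward-induction step of that proof builds an $s$-cobordism from $4(S^2\x D^3)$, i.e.\ four $2$-handles and four $3$-handles, and one splices this against the collapse).

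The statement that $X_\vartheta$ admits a $\DIFF$ structure if and only if $X$ does follows because the normal bordism is $\TOP$ and the Kirby--Siebenmann invariant is a bordism invariant that is unchanged by adding $S^2\x S^2$ summands and by the handle attachments (which are smoothable); alternatively, smoothing theory in dimension $4$ plus the fact that $X_\vartheta \# k(S^2\x S^2)$ and $X \# k(S^2\x S^2)$ are homeomorphic for appropriate $k$ forces $\mathrm{ks}(X_\vartheta) = \mathrm{ks}(X)$, and $\mathrm{ks}$ is the sole obstruction to a $\DIFF$ structure on a $4$-manifold after connected sum with $S^2\x S^2$ summands that is already present.

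The main obstacle I expect is bookkeeping the exact handle count: matching ``six $2$-handles and six $3$-handles'' requires carefully tracking how the $r+r$ handles of the Cappell--Shaneson normal bordism interact with the handle decomposition of the $s$-cobordisms produced in Theorem~\ref{ThmB}'s backward induction, and verifying that no cancellation or creation of handles is forced --- in particular pinning down the minimal rank $r$ needed to represent an arbitrary class in $\UNil_5^h(\Z;\Z^-,\Z^-)$, since the handle count $2r$ and the final summand count both depend on it, and the clean number $6 = 2r + (\text{correction})$ has to be reconciled with $d=2$ from Proposition~\ref{PropB}.
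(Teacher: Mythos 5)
Your part~(1) follows the paper's route: identify $\UNil_5^h = \UNil_5^s$ via Waldhausen's vanishing, construct $(X_\vartheta, h_\vartheta)$ by Freedman--Quinn/Wall realization, check tangentiality, and get injectivity from freeness of the $\UNil$-action. One missing piece: the paper invokes Cappell's Theorem~6 \cite{CappellFree} to show that the composite $\vartheta \mapsto (X_\vartheta, h_\vartheta) \mapsto \Split_L(h_\vartheta; S)$ is the identity; you assert that the Wall realization has splitting obstruction $\vartheta$ without citation, and it is not automatic.

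Your part~(2) has concrete gaps. First, you claim every $\vartheta \in \UNil_5^h(\Z;\Z^-,\Z^-)$ is represented by a rank-$2$ unitary automorphism, but offer no argument; since this group is infinitely generated \cite{CD}, elements can a priori require large representatives, and nothing in your proposal bounds the rank. The paper instead applies Theorem~\ref{Thm_Real4} with $r = d + 1 = 3$ (from Proposition~\ref{PropB} applied to $\pi = \D_\infty^{-,-}$, whose free-abelian subgroup $\C_\infty$ has index \emph{two}, not four), which yields the $6 + 6$ handle count, the DIFF structure on $X_\vartheta \# 3(S^2 \x S^2)$, and $6$-stable homeomorphism, and then applies Corollary~\ref{Cor_PropB} to pass from $6$-stable to $3$-stable. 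Second, you attempt to apply Theorem~\ref{ThmB} directly to $X$ to get $d = 2$ summands, but $X = M \# M'$ does \emph{not} satisfy the hypothesis $(X,\bdry X) = (X_{-1},\bdry X) \# (S^2\x S^2)$ of Theorem~\ref{ThmB}; the paper's Corollary~\ref{Cor_PropB} handles this by first stabilizing once, which is exactly why the final answer is $3(S^2\x S^2)$, not the $2$ you suggest as an improvement. Third, your handle-count argument (concatenating the $2+2$ Cappell--Shaneson handles with extra handles from the $s$-cobordisms in the proof of Theorem~\ref{ThmB}) is ad hoc and does not recover the stated $6+6$; the paper reads the count off directly from Theorem~\ref{Thm_Real4} with $r=3$. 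You do correctly identify in your final paragraph that the minimal rank of the representative is the crux, which is a fair self-assessment of where the argument is shakiest.
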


\begin{proof}
Recall that the forgetful map
\[
L_5^s(\D_\infty^{-,-}) \xra{\quad} L_5^h(\D_\infty^{-,-})
\]
is an isomorphism, since the Whitehead group $\Wh(\D_\infty)$ vanishes.
Then the existence and uniqueness of $(X_\vartheta, h_\vartheta)$ and its handle description follow from Theorem \ref{Thm_Real4}, using $r=d+1=3$ from Proposition \ref{PropB} and Proof \ref{ThmB}.
By \cite[Theorem 6]{CappellFree}, the following composite function is the identity on
$\UNil_5^h(\Z;\Z^-,\Z^-)$:
\[
\vartheta \longmapsto (X_\vartheta,h_\vartheta) \longmapsto \Split_L(h_\vartheta;S).
\]
In order to show that the other composite is the identity, note that two tangential homotopy equivalences $(X_\vartheta, h_\vartheta)$ and $(X_\vartheta', h_\vartheta')$ with the same splitting obstruction $\vartheta$ must be homeomorphic, by freeness of the $\UNil_5^h$ action on the
structure set $\cS_\TOP^h(X)$.
Finally, since the 4-manifolds $X_\vartheta$ and $X$ are $6$-stably homeomorphic via the $\TOP$ normal bordism between $h_\vartheta$ and $\Id_X$, we conclude that
they are in fact $3$-stably homeomorphic by Corollary \ref{Cor_PropB}.
\end{proof}

The six 2-handles are needed for map data and only three are needed to relate domains.

\begin{cor}\label{Cor_FakeBound}
The above theorem is true for $X = \RP^4 \# \RP^4$, with $\RP^4$ of $w_2$-type III. \qed
\end{cor}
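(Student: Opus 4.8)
The plan is to read off Corollary~\ref{Cor_FakeBound} as the case $M = M' = \RP^4$ of Theorem~\ref{Thm_TangentialBordism}; the only work is to check the hypotheses and the parenthetical ``$w_2$-type~III''. First, $\RP^4$ is a closed smooth (hence $\TOP$) $4$-manifold with $\pi_1(\RP^4) \iso \C_2$, and it is nonorientable, so its orientation character $w_1 : \C_2 \to \{\pm 1\}$ is the nontrivial homomorphism. Consequently $(\pi_1(X), w_1(X)) = \D_\infty^{-,-}$ and the $3$-sphere $S$ defining the connected sum $X = \RP^4 \# \RP^4$ is two-sided, so the hypotheses of Theorem~\ref{Thm_TangentialBordism} hold; the relevant exotic group is $\UNil_5^h(\Z;\Z^-,\Z^-)$, computed by Connolly--Davis \cite{CD} and equal to $\UNil_5^s$ as recorded just before that theorem.

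Next I would confirm the $w_2$-type. From $w(\RP^4) = (1+a)^5 = 1 + a + a^4$ in $H^*(\RP^4;\Z/2)$ --- with $a$ the generator of $H^1$ --- one reads off $w_2(\RP^4) = 0$, while $w_1(\RP^4)^2 = a^2 \ne 0$ and $w_1(\RP^4)^4[\RP^4] = 1$; together with the fact that the universal cover $S^4$ is spin, this profile places $\RP^4$ in $w_2$-type~III of the Hambleton--Kreck--Teichner trichotomy quoted above. So ``$\RP^4$ of $w_2$-type~III'' simply designates the standard real projective $4$-space, and in particular $X = \RP^4 \# \RP^4$ is a connected sum of smooth manifolds, hence admits a $\DIFF$ structure; thus the smoothability clause of Theorem~\ref{Thm_TangentialBordism}(2) holds on the nose, and every $X_\vartheta$ is then smoothable.

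Granting this, Theorem~\ref{Thm_TangentialBordism} applies verbatim and yields the corollary: for each $\vartheta \in \UNil_5^h(\Z;\Z^-,\Z^-)$ a unique tangential homotopy structure $(X_\vartheta,h_\vartheta)$ with $\Split_L(h_\vartheta;S) = \vartheta$; the bijectivity of $\vartheta \mapsto (X_\vartheta,h_\vartheta)$; the homeomorphism $X_\vartheta \# 3(S^2\x S^2) \homeo X \# 3(S^2\x S^2)$, with stabilization count $3 = d+1$ obtained from $d = n+1 = 2$ for $\D_\infty$ via Proposition~\ref{PropB} and the proof of Theorem~\ref{ThmB}; and the normal bordism from $h_\vartheta$ to $\Id_X$ built from six $2$-handles and six $3$-handles. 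I expect essentially no obstacle here, the corollary being a direct specialization; the one point that will require care is matching the $w_2$-type~III normalization of $\RP^4$ with the manifold of Jahren--Kwasik \cite[Theorem 1(f)]{JK} whose stable-homeomorphism bound Corollary~\ref{Cor_FakeBound} is designed to sharpen to~$3$.
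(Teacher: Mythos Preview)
Your proposal is correct and matches the paper's approach exactly: the paper's own proof is simply ``\qed'', i.e.\ the corollary is an immediate specialization of Theorem~\ref{Thm_TangentialBordism} to $M=M'=\RP^4$, and you have merely filled in the routine verification of hypotheses (nonorientable, $\pi_1 \iso \C_2$, $w_2$-type~III via $w(\RP^4)=(1+a)^5$) that the paper leaves implicit.
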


\begin{rem}
We comment on a specific aspect of the topology of $X$.
Every homotopy automorphism of $\RP^4\#\RP^4$ is homotopic to a homeomorphism \cite[Lemma 1]{JK}. Then any automorphism of the group $\D_\infty$ can be realized \cite[Claim]{JK}.
The homeomorphism classes of closed topological 4-manifolds $X'$ in the (not necessarily tangential) homotopy type of $X$ has been computed in \cite[Theorem 2]{BDK}.
The classification involves the study \cite[Theorem 1]{BDK} of the effect of transposition of the bimodules $\Z^-$ and $\Z^-$ in the abelian group $\UNil_5^h(\Z;\Z^-,\Z^-)$.
As promised in the introduction, Corollary \ref{Cor_FakeBound} provides a \emph{uniform upper bound} on the number of $S^2 \x S^2$ connected-summands sufficient for \cite[Theorem 1(f)]{JK}, and on the number of 2- and 3-handles sufficient for \cite[Proof 1(f)]{JK}.
\end{rem}

\bibliographystyle{alpha}
\bibliography{CancellationTop4Mfld}

\begin{thebibliography}{HKT94}

\bibitem[Bak81]{BakBook}
Anthony Bak.
\newblock {\em {$K$}-theory of forms}, volume~98 of {\em Annals of Mathematics
  Studies}.
\newblock Princeton University Press, Princeton, N.J.; University of Tokyo
  Press, Tokyo, 1981.

\bibitem[Bas68]{Bass1}
Hyman Bass.
\newblock {\em Algebraic {$K$}-theory}.
\newblock W. A. Benjamin, Inc., New York-Amsterdam, 1968.

\bibitem[Bas73]{Bass2}
Hyman Bass.
\newblock Unitary algebraic {$K$}-theory.
\newblock In {\em Algebraic {K}-theory, {III}: {H}ermitian {K}-theory and
  geometric applications ({P}roc. {C}onf., {B}attelle {M}emorial {I}nst.,
  {S}eattle, {W}ash., 1972)}, pages 57--265. Lecture Notes in Math., Vol. 343.
  Springer, Berlin, 1973.

\bibitem[BDK07]{BDK}
Jeremy Brookman, James~F. Davis, and Qayum Khan.
\newblock Manifolds homotopy equivalent to {$P^n\#P^n$}.
\newblock {\em Math. Ann.}, 338(4):947--962, 2007.

\bibitem[Bou98]{Bourbaki}
Nicolas Bourbaki.
\newblock {\em Commutative algebra. {C}hapters 1--7}.
\newblock Elements of Mathematics (Berlin). Springer-Verlag, Berlin, 1998.
\newblock Translated from the French, Reprint of the 1989 English translation.

\bibitem[Cap74]{CappellFree}
Sylvain~E. Cappell.
\newblock Manifolds with fundamental group a generalized free product.
\newblock {\em Bull. Amer. Math. Soc.}, 80:1193--1198, 1974.

\bibitem[CD04]{CD}
Francis~X. Connolly and James~F. Davis.
\newblock The surgery obstruction groups of the infinite dihedral group.
\newblock {\em Geom. Topol.}, 8:1043--1078 (electronic), 2004.

\bibitem[CS71]{CS1}
Sylvain~E. Cappell and Julius~L. Shaneson.
\newblock On four-dimensional surgery and applications.
\newblock {\em Comment. Math. Helv.}, 46:500--528, 1971.

\bibitem[Eis95]{Eisenbud}
David Eisenbud.
\newblock {\em Commutative algebra}, volume 150 of {\em Graduate Texts in
  Mathematics}.
\newblock Springer-Verlag, New York, 1995.
\newblock With a view toward algebraic geometry.

\bibitem[FQ90]{FQ}
Michael~H. Freedman and Frank Quinn.
\newblock {\em Topology of 4-manifolds}, volume~39 of {\em Princeton
  Mathematical Series}.
\newblock Princeton University Press, Princeton, NJ, 1990.

\bibitem[Gro66]{EGA}
A.~Grothendieck.
\newblock \'{E}l\'ements de g\'eom\'etrie alg\'ebrique, {IV}: \'{E}tude locale
  des sch\'emas et des morphismes de sch\'emas, {III}.
\newblock {\em Inst. Hautes \'Etudes Sci. Publ. Math.}, (28):255, 1966.

\bibitem[Har77]{Hartshorne}
Robin Hartshorne.
\newblock {\em Algebraic geometry}.
\newblock Springer-Verlag, New York-Heidelberg, 1977.
\newblock Graduate Texts in Mathematics, No. 52.

\bibitem[HK93]{HK}
Ian Hambleton and Matthias Kreck.
\newblock Cancellation of hyperbolic forms and topological four-manifolds.
\newblock {\em J. Reine Angew. Math.}, 443:21--47, 1993.

\bibitem[HKT94]{HKT}
Ian Hambleton, Matthias Kreck, and Peter Teichner.
\newblock Nonorientable {$4$}-manifolds with fundamental group of order {$2$}.
\newblock {\em Trans. Amer. Math. Soc.}, 344(2):649--665, 1994.

\bibitem[JK06]{JK}
Bj{\o}rn Jahren and S{\l}awomir Kwasik.
\newblock Manifolds homotopy equivalent to {$\mathbf R{\rm P}^4\#\mathbf R{\rm
  P}^4$}.
\newblock {\em Math. Proc. Cambridge Philos. Soc.}, 140(2):245--252, 2006.

\bibitem[Kha06]{Khan_Dissertation}
Qayum Khan.
\newblock {\em On connected sums of real projective spaces}.
\newblock ProQuest LLC, Ann Arbor, MI, 2006.
\newblock Thesis (Ph.D.)--Indiana University.

\bibitem[Kha12]{Khan_connect}
Qayum Khan.
\newblock Homotopy invariance of 4-manifold decompositions: connected sums.
\newblock {\em Topology Appl.}, 159(16):3432--3444, 2012.

\bibitem[Wal78]{Waldhausen}
Friedhelm Waldhausen.
\newblock Algebraic {$K$}-theory of generalized free products.
\newblock {\em Ann. of Math. (2)}, 108(1--2):135--256, 1978.

\bibitem[Wal99]{Wall}
C.~T.~C. Wall.
\newblock {\em Surgery on compact manifolds}, volume~69 of {\em Mathematical
  Surveys and Monographs}.
\newblock American Mathematical Society, Providence, RI, second edition, 1999.
\newblock Edited and with a foreword by A. A. Ranicki.

\end{thebibliography}

\end{document}